\def\benm{\begin{enumerate}}
\def\eenm{\end{enumerate}}
\def\bal{\begin{align}}
\def\eal{\end{align}}
\newtheorem{theorem}{Theorem}[section]
\newtheorem{lemma}[theorem]{Lemma}
\theoremstyle{definition}
\newtheorem{proposition}[theorem]{Proposition}
\newtheorem{corollary}[theorem]{Corollary}
\theoremstyle{remark}
\newtheorem{remark}[theorem]{Remark}
\numberwithin{equation}{section}
\begin{document}

\title[Convolution and Involution on function spaces of homogeneous spaces]{Convolution and Involution on function spaces of homogeneous spaces}

\author[Arash Ghaani Farashahi]{Arash Ghaani Farashahi}
\address{Department of Pure Mathematics, Faculty of Mathematical sciences, Ferdowsi University of
Mashhad (FUM), P. O. Box 1159, Mashhad 91775, Iran.}
\email{ghaanifarashahi.arash@stu-mail.um.ac.ir}
\email{ghaanifarashahi@hotmail.com}
\email{ghaanifarashahi@gmail.com}

\curraddr{}

\subjclass[2000]{Primary 43A15, 43A85}

\date{}

\keywords{Convolution, involution, homogeneous space, relatively invariant measure, $G$-invariant measure.}
\thanks{E-mail addresses: ghaanifarashahi.arash@stu-mail.um.ac.ir, ghaanifarashahi@hotmail.com (Arash Ghaani Farashahi)}

\begin{abstract}
Let $G$ be a locally compact group and also let $H$ be a compact subgroup of $G$. It is shown that, if $\mu$ is a relatively invariant measure on $G/H$ then there is a well-defined convolution on $L^1(G/H,\mu)$ such that the Banach space $L^1(G/H,\mu)$ becomes a Banach algebra.
We also find a generalized definition of this convolution for other $L^p$-spaces. Finally, we show that various types of involutions can be considered on $G/H$.
\end{abstract}

\maketitle

\section{\bf{Introduction}}

The theory of convolution on function spaces related to locally compact groups has been studied completely in many basic references of harmonic analysis such as \cite{FollH}, \cite{HR1} or \cite {50}. More precisely, convolution theory of functions defined on locally compact groups is the restriction of the convolution theory of measure algebra related to each locally compact group into the function algebra $L^1(G)$, see \cite{FollH} or \cite{HR1}. That is for each $f,g\in L^1(G)$ the convolution $f\ast g$ is defined for a.e. $x\in G$ via
$$f\ast g(x)=\int_Gf(y)g(y^{-1}x)dy.$$
Another approach to the convolution theory on function spaces related to locally compact group can be found in \cite{50}, which first defines convolution on $\mathcal{C}_c(G)$ the space of all continuous functions with compact support and then by continuity and density of $\mathcal{C}_c(G)$ in $L^1(G)$ extend it to $L^1(G)$. Although these two approaches to the convolution theory are equivalent but it seems that
studying properties of the convolution theory on $\mathcal{C}_c(G)$ is more efficient.

Through the world of harmonic analysis and after locally compact groups, we have objects like $G$-spaces and in special case
transitive $G$-spaces which are well known as homogeneous spaces. Proposition 2.44 of \cite{FollH} guarantee that most of transitive $G$-spaces can be considered as a quotient space $G/H$ for some closed subgroup $H$ of $G$. Although $G/H$ is not group when $H$ is not
normal, but principal part of the classical harmonic analysis on $G$ carries over homogeneous spaces.
Theory of classical harmonic analysis on coset space $G/H$ is quite well studied by several authors (see \cite{FollH}, \cite{Forest}, \cite{50}). In many theories of connecting classical harmonic analysis and also mathematical physics homogeneous spaces placed. Common spaces in physics are locally compact homogeneous spaces of the form $G/H$ where $G$ is a locally compact group and $H$ a compact subgroup of $G$ such as the hypersphere $\mathbb{S}^{n-1}$ which is the homogeneous space ${\rm SO}(n)/{\rm SO}(n-1)$. In view of time-frequency analysis, an appropriate convolution and involution on homogeneous spaces will be applicable for (digital) signal processing or filter design (filtering) of 3D image reconstruction on regular curves and surfaces (see \cite{Dey}).

This article contains 4 sections. Section 2 is devoted to fix notations and also a brief summary on homogeneous spaces. In section 3 using the surjective linear map $T_H:\mathcal{C}_c(G)\to \mathcal{C}_c(G/H)$ we define a well-defined convolution on $\mathcal{C}_c(G/H)$ and due to the continuity of this convolution we extend it to $L^1(G/H,\mu)$, where $\mu$ is a relatively invariant measure on $G/H$. It is also proved that $L^1(G/H,\mu)$ with respect to this convolution becomes a Banach algebra.
We also show that if $\mu$ is a $G$-invariant measure on $G/H$, existence of a well-defined involution on the Banach algebra $L^1(G/H,\mu)$ such that $T_H:L^1(G)\to L^1(G/H,\mu)$ becomes a continuous $*$-homomorphism is a necessary and sufficient condition for the subgroup $H$ to be normal in $G$. Finally, in section 4 we introduce some novel approaches for the concept of involution on $L^1(G/H,\mu)$.

\section{\bf{Preliminaries and notations}}

Let $\mathcal{A}$ be a Banach algebra. A subset $\{e_\alpha\}_{\alpha\in I}$ is said to be a right (resp. left) approximate identity for $\mathcal{A}$ if and only if for all $x\in\mathcal{A}$ satisfies $\lim_{\alpha\in I}\|xe_\alpha-x\|_{\mathcal{A}}=0$ (resp. $\lim_{\alpha\in I}\|e_\alpha x-x\|_{\mathcal{A}}=0$) and also by a two-sided approximate identity we mean a left and also right approximate identity.
If $\mathcal{A}$ is a Banach $*$-algebra, a two-sided approximate identity $\{e_\alpha\}$ is said to be invariant under involution if and only if for all $\alpha\in I$ we have $e_\alpha=e_\alpha^*$.

Let $G$ be a locally compact group with the left Haar measure $dx$ and
$H$ be a closed subgroup of $G$ with the left Haar measure $dh$ also let
$\Delta_G$ (resp. $\Delta_H$) be modular function of $G$ (resp. $H$). The left coset space $G/H$ is considered as a homogeneous space that $G$ acts on it from the left and also $\pi:G\to G/H$ is the surjective canonical mapping defined by $\pi(x)=xH$ for all $x\in G$.
It has been shown that  $\mathcal{C}_c(G/H)$ the space of all complex valued continuous functions on $G/H$ with compact support, consists of all $P_H(f)$ functions,  where $f\in\mathcal{C}_c(G)$ and
\begin{equation}
P_H(f)(xH)=\int_Hf(xh)dh.
\end{equation}
In fact, the mapping $P_H:\mathcal{C}_c(G)\to\mathcal{C}_c(G/H)$ is a surjective bounded linear operator (Proposition 2.48 of \cite{FollH}).

If $\mu$ is a Radon measure on $G/H$ and also $x\in G$, the translation $\mu_x$ of $\mu$ is defined by $\mu_x(E)=\mu(xE)$ for all Borel subset $E$ of $G/H$. The measure $\mu$ on $G/H$ is called $G$-invariant if $\mu_x=\mu$ for all $x\in G$, and also $\mu$ is said to be strongly quasi-invariant, if some continuous function $\theta:G\times G/H\to(0,\infty)$ satisfies
$$d\mu_x(yH)=\theta(x,yH)d\mu(yH)\hspace{0.35cm} {\rm \ for \ all} \ x,y\in G.$$
When the function $\theta(x,.)$ reduce to constants, $\mu$ is called relatively invariant under $G$.
A rho-function for the pair $(G,H)$, is a continuous function $\rho:G\to(0,\infty)$ which satisfies
$\rho(xh)={\Delta_H(h)}{\Delta_G(h)}^{-1}\rho(x),$
for each $x\in G$ and $h\in H$. It has been prove that when $H$ is a closed subgroup of $G$, the pair $(G,H)$ admits a rho-function and also for each rho-function $\rho$ on $G$, there is a strongly quasi-invariant measure $\mu$ on $G/H$ such that for each $f\in\mathcal{C}_c(G)$
$$\int_{G/H}P_H(f)(xH)d\mu(xH)=\int_Gf(x)\rho(x)dx,$$
and also satisfies
$$d\mu_x(yH)=\frac{\rho(xy)}{\rho(y)}d\mu(yH)\hspace{0.35cm} {\rm \ for \ all} \ x,y\in G.$$
If $\mu$ is a relatively invariant measure on $G/H$ arises from a rho-function $\rho$, then for all $x,y\in G$ we have (see \cite{Kam3})
\begin{equation}\label{hp}
\rho(xy)=\frac{\rho(x)\rho(y)}{\rho(e)}.
\end{equation}
Moreover, all strongly quasi invariant measures in $G/H$ arise from rho-functions in this manner and all these measures are strongly equivalent (see \cite{FollH}).
As in \cite{FollH} proved, the homogeneous space $G/H$ has a $G$-invariant measure if and only if the constant function $\rho=1$ is a rho-function for the pair $(G,H)$, or equivalently $\Delta_G|_{H}=\Delta_H$.

If $\mu$ is a strongly quasi invariant measure on $G/H$ which is associate with the rho-function $\rho$ for the pair $(G,H)$, then the mapping $T_H:L^1(G)\to L^1(G/H,\mu)$ defined almost everywhere by
\begin{equation}
T_H(f)(xH)=\int_{H}\frac{f(xh)}{\rho(xh)}dh,
\end{equation}
is a surjective bounded linear operator with $\|T_H\|\le1$ (see \cite{50}) and also satisfies the generalized Mackey-Bruhat formula,
\begin{equation}\label{Weil}
\int_{G/H}T_H(f)(xH)d\mu(xH)=\int_Gf(x)dx,
\end{equation}
which is also well known as the Weil's formula.

The natural action of $G$ on the left coset space $G/H$ induces the left translation for measurable functions on $G/H$. More precisely, the left translation $L_x\varphi$ of a measurable function $\varphi$ on $G/H$
is defined via $L_x\varphi(yH)=\varphi(x^{-1}yH)$ for a.e. $yH\in G/H$ and all $x\in G$.
It can be checked that, if $\varphi$ belongs to $\mathcal{C}_c(G/H)$ then we have $L_x\varphi\in\mathcal{C}_c(G/H)$ and also when $\mu$ is a $G$-invariant measure which arises from the rho-function $\rho=1$, the linear map $T_H$ commutes with left translations.
For all $\varphi\in L^1(G/H,\mu)$, the mapping from $G$ into $L^1(G/H,\mu)$ given by $x\mapsto L_x\varphi$ is continuous and also for all $x\in G$ we have
\begin{equation}
\|L_x\varphi\|_{L^1(G/H,\mu)}=\|\varphi\|_{L^1(G/H,\mu)}.
\end{equation}

If $H$ is a normal subgroup of $G$, the left (right) coset space $G/H$ is a locally compact group and so it possesses a left Haar measure which is clearly $G$-invariant and so that we can assume that the Haar measure arises from the rho-function $\rho=1$.
Due to Theorem 3.5.4 of \cite{50}, the linear operator $T_H$ is a continuous $*$-homomorphism and also
\begin{equation}
\mathcal{J}^1(G,H):=\{f\in L^1(G):T_H(f)=0\},
\end{equation}
is a closed two-sided ideal of $L^1(G)$, for more on this topic see \cite{50}.

We recall that, the Banach space $L^1(G)$ is a Banach $*$-algebra with respect to the convolution and involution given by
\begin{equation}
f\ast g(x)=\int_Gf(y)g(y^{-1}x)dy\hspace{1cm} f^*(x)=\Delta_G(x^{-1})\overline{f(x^{-1})},\hspace{1cm} \forall f,g\in L^1(G).
\end{equation}

\section{\bf{Convolution and involution on homogeneous spaces}}

Throughout this article, we assume that $H$ is a compact subgroup of a locally compact group $G$ with a normalized Haar measure.
In this case, $\Delta_G|_H=\Delta_H=1$ and also each rho-function $\rho$ for the pair $(G,H)$ satisfies $\rho(xh)=\rho(x)$ for all $x\in G$ and $h\in H$. These facts guarantee the existence
of a relatively invariant and also a $G$-invariant measure on the left coset space $G/H$.

In \cite{Kam1} R. Kamyabi-Gol and N. Tavallaei studied the possible convolution and also involution on homogeneous spaces of the form $G/H$ where $H$ is a compact subgroup of a locally compact group $G$. The main technique which they have used, is that to generalize the concepts of convolution and also involution on $L^1(G/H)$ such that the linear map $T_H:L^1(G)\to L^1(G/H)$ be a $*$-homomorphism. We recall that, when $H$ is a closed normal subgroup of a locally compact group $G$ due to Theorem 3.5.4 of \cite{50} the linear map $T_H$ is a bounded $*$-homomorphism.

Due to \cite{Kam1} the convolution (resp. involution) defined for all
$\varphi,\psi\in\mathcal{C}_c(G/H)$ via
$$\varphi\ast\psi=T_H(f\ast g)\ \  (\mathrm{resp.}\ \varphi^*=T_H(f^*)),$$
where $f,g\in \mathcal{C}_c(G)$ with $T_H(f)=\varphi$ and $T_H(g)=\psi$. To show that the convolution is well-defined,
it should be proved that for a fixed $f\in L^1(G)$ if $T_H(f)=0$ then $T_H(f\ast g)=0$ for all $g\in L^1(G)$ and also to check that the involution is well-defined, it should be deduced that $T_H(f)=0$ implies $T_H(f^*)=0$. Because of these challenges, authors in \cite{Kam1} introduce the set
\begin{equation}
P(G/H)=\{\varphi\in\mathcal{C}_c(G/H):\exists \eta\in\mathcal{C}(G/H)\ s.t\ \varphi(x^{-1}H)=\eta(xH)\overline{\varphi(xH)}\ \forall x\in G\}.
\end{equation}
They also claimed that when $\mu$ is a relatively invariant measure on $G/H$, the linear span of $P(G/H)$ is $\|.\|_{L^p(G/H,\mu)}$-dense in $L^p(G/H,\mu)$ (see Proposition 3.3 of \cite{Kam1}). For all $p\ge 1$ the set $P(G/H)$ is contained in the pure subspace
\begin{equation}
A^p(G/H,\mu):=\{\varphi\in L^p(G/H,\mu): L_h\varphi=\varphi\ \forall h\in H\},
\end{equation}
of $L^p(G/H,\mu)$, which is $\|.\|_{L^p(G/H,\mu)}$-closed. Because let $\varphi\in P(G/H)$ be arbitrary and also $\eta\in \mathcal{C}(G/H)$ such that $\varphi(x^{-1}H)=\eta(xH)\overline{\varphi(xH)}$ for each $x\in G$. Now, for all $h\in H$ and also $x\in G$ we have
\begin{align*}
\varphi(hxH)&=\varphi((x^{-1}h^{-1})^{-1}H)
\\&=\eta(x^{-1}h^{-1}H)\overline{\varphi(x^{-1}h^{-1}H)}
\\&=\eta(x^{-1}H)\overline{\varphi(x^{-1}H)}=\varphi(xH).
\end{align*}
Thus, $\varphi$ belongs to $A^p(G/H,\mu)$ and hence $P(G/H)\subseteq A^p(G/H,\mu)$ which implies that the linear span $\langle P(G/H)\rangle$ of $P(G/H)$
is contained in $A^p(G/H,\mu)$, for all $p\ge 1$.
Hence, $\langle P(G/H)\rangle$ can not be dense in $L^p(G/H,\mu)$ because it is contained in a closed proper subspace of $L^p(G/H,\mu)$, unless $H$ be a normal subgroup.

More precisely, due to the above descriptions it can be easily checked that the convolution and also the involution defined in \cite{Kam1} are well-defined if and only if $H$ is a normal subgroup of $G$ and clearly in this case the convolution and also the involution coincide with the standard $*$-algebra structure of $L^1(G/H)$ via the group structure of $G/H$.

To fix an appropriate definition of a convolution, we focus our attention to a special sub-algebra of $L^1(G)$. Let
\begin{equation}
\mathcal{C}_c(G:H):=\{f\in\mathcal{C}_c(G):f(xh)=f(x)\ \forall x\in G\ \forall h\in H\}.
\end{equation}
If $f\in \mathcal{C}_c(G)$ and $g\in\mathcal{C}_c(G:H)$ then $f\ast g\in\mathcal{C}_c(G:H)$
and therefore $\mathcal{C}_c(G:H)$ is a left ideal and also a sub-algebra of $\mathcal{C}_c(G)$. We denote the $\|.\|_{L^1(G)}$-closure of $\mathcal{C}_c(G:H)$ in $L^1(G)$ by $L^1(G:H)$. It can be easily checked that $L^1(G:H)$ is a $\|.\|_{L^1(G)}$-closed left ideal and also a $\|.\|_{L^1(G)}$-closed sub-algebra of $L^1(G)$ and we have
\begin{equation}L^1(G:H)=\{f\in L^1(G):R_hf=f\ \ \forall h\in H\}.
\end{equation}
Due to Theorem 2.43 of \cite{FollH} for all $f\in L^1(G:H)$ and also $x\in G$ we get $L_xf\in L^1(G:H)$.

In the following proposition, we show that the restriction of $T_H$ into $\mathcal{C}_c(G:H)$ is injective.


\begin{proposition}\label{1}
{\it Let $H$ be a compact subgroup of a locally compact group $G$ and also let $\mu$ be a relatively invariant measure on $G/H$ which arises from the rho-function $\rho$. Then the following statements hold.
\begin{enumerate}
\item $T_H$ maps ${\mathcal{C}_c(G:H)}$ onto $\mathcal{C}_c(G/H)$.
\item $\mathcal{C}_c(G:H)=\{\varphi_{\pi}:=\rho.\varphi\circ \pi:\varphi\in\mathcal{C}_c(G/H)\}$.
\item $T_H|_{\mathcal{C}_c(G:H)}$ is injective.
\end{enumerate}}
\end{proposition}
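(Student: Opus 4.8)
The plan is to deduce all three statements from part (2), which carries the real content; parts (1) and (3) then follow at once from a single computation of $T_H$ on functions of the form $\varphi_\pi$. First I would record the elementary fact that, since $H$ is compact, every rho-function satisfies $\rho(xh)=\rho(x)$, so $\rho$ is constant on right $H$-cosets and descends to a continuous positive function $\dot\rho$ on $G/H$ with $\dot\rho\circ\pi=\rho$; continuity of $\dot\rho$ is automatic because $\pi$ is an open continuous surjection, hence a quotient map.

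With this in hand, part (2) is a routine check in two directions. If $\varphi\in\mathcal{C}_c(G/H)$, then $\varphi_\pi=\rho\cdot(\varphi\circ\pi)$ is continuous as a product of continuous functions, it is right $H$-invariant since $\varphi_\pi(xh)=\rho(xh)\varphi(xhH)=\rho(x)\varphi(xH)=\varphi_\pi(x)$, and $\supp\varphi_\pi\subseteq\pi^{-1}(\supp\varphi)$, which is compact because $H$ is compact (write $\supp\varphi=\pi(C)$ for a compact $C\subseteq G$, possible since $\pi$ is open, so that $\pi^{-1}(\supp\varphi)=CH$ is the image of the compact set $C\times H$ under multiplication); hence $\varphi_\pi\in\mathcal{C}_c(G:H)$. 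Conversely, if $f\in\mathcal{C}_c(G:H)$ then $f$ is constant on right $H$-cosets, so $f=g\circ\pi$ for a unique function $g$ on $G/H$; $g$ is continuous because $\pi$ is a quotient map and $f=g\circ\pi$ is continuous, and $\supp g\subseteq\pi(\supp f)$ is compact, so setting $\varphi:=g/\dot\rho\in\mathcal{C}_c(G/H)$ we get $f=\rho\cdot(\varphi\circ\pi)=\varphi_\pi$. This proves (2) and shows that $\varphi\mapsto\varphi_\pi$ is a bijection of $\mathcal{C}_c(G/H)$ onto $\mathcal{C}_c(G:H)$.

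The key computation is then: for $\varphi\in\mathcal{C}_c(G/H)$ and $x\in G$,
\[
T_H(\varphi_\pi)(xH)=\int_H\frac{\varphi_\pi(xh)}{\rho(xh)}\,dh=\int_H\frac{\rho(xh)\varphi(xhH)}{\rho(xh)}\,dh=\varphi(xH)\int_H dh=\varphi(xH),
\]
where the last equality uses that the Haar measure on the compact group $H$ is normalized. Hence $T_H\circ(\cdot)_\pi=\mathrm{id}_{\mathcal{C}_c(G/H)}$. Combined with the bijection of part (2), this gives $T_H(\mathcal{C}_c(G:H))=\{T_H(\varphi_\pi):\varphi\in\mathcal{C}_c(G/H)\}=\mathcal{C}_c(G/H)$, which is (1); and for (3), if $f\in\mathcal{C}_c(G:H)$ with $T_H(f)=0$, then by (2) $f=\varphi_\pi$ for some $\varphi\in\mathcal{C}_c(G/H)$, whence $\varphi=T_H(\varphi_\pi)=T_H(f)=0$ and so $f=\varphi_\pi=0$.

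I expect the only delicate point to be the compact-support claim in part (2): this is precisely where compactness of $H$ is essential, since for a general closed subgroup $\pi^{-1}(\supp\varphi)$ need not be compact and the identification of $\mathcal{C}_c(G:H)$ with $\mathcal{C}_c(G/H)$ would fail. All remaining verifications — the descent of $\rho$, the quotient-map continuity arguments, and the integral computation — are routine.
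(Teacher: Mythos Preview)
Your proof is correct and follows essentially the same approach as the paper: both arguments hinge on showing that $\varphi\mapsto\varphi_\pi$ and $T_H$ are mutually inverse between $\mathcal{C}_c(G/H)$ and $\mathcal{C}_c(G:H)$, via the same computation $T_H(\varphi_\pi)=\varphi$. The only minor difference is organizational: the paper proves (1), (2), (3) in order and establishes the reverse inclusion of (2) by directly computing $T_H(f)_\pi=f$ for $f\in\mathcal{C}_c(G:H)$, whereas you prove (2) first via a quotient-map descent argument (writing $f=g\circ\pi$ and setting $\varphi=g/\dot\rho$) and then read off (1) and (3); both routes are equally elementary.
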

\begin{proof}
\begin{enumerate}
\item It is clear that $T_H(\mathcal{C}_c(G:H))\subseteq \mathcal{C}_c(G/H)$. Let $\varphi\in \mathcal{C}_c(G/H)$ and put $\varphi_{\pi}(x):=\rho(x)\varphi(xH)$ for all $x\in G$. Due to the relatively invariance of $\mu$ and compactness of $H$ we get $\varphi_{\pi}\in\mathcal{C}_c(G:H)$ and $T_H(\varphi_{\pi})=\varphi$. More precisely, for all $x\in G$ and $h\in H$ we have
\begin{align*}
\varphi_\pi(xh)&=\rho(xh)\varphi(xhH)
\\&=\rho(x)\varphi(xH)=\varphi_\pi(x).
\end{align*}
Thus, for all $x\in G$ we get
\begin{align*}
T_H(\varphi_\pi)(xH)&=\int_H\frac{\varphi_\pi(xh)}{\rho(xh)}dh
\\&=\rho(x)\int_H\frac{\varphi(xhH)}{\rho(xh)}dh
\\&=\rho(x)\int_H\frac{\varphi(xH)}{\rho(x)}dh=\varphi(xH).
\end{align*}
\item It is clear that $\{\varphi_{\pi}:\varphi\in\mathcal{C}_c(G/H)\}\subseteq \mathcal{C}_c(G:H)$. Now let $f\in\mathcal{C}_c(G:H)$ be given. Hence, for all $x\in G$ and $h\in H$ we get $f(xh)=f(x)$. Then, $T_H(f)_{\pi}=f$. Because for all $x\in G$ we have
\begin{align*}
T_H(f)_{\pi}(x)&=\rho(x)T_H(f)(xH)
\\&=\rho(x)\int_H\frac{f(xh)}{\rho(xh)}dh
\\&=f(x)\rho(x)\int_H\frac{1}{\rho(xh)}dh
\\&=f(x)\rho(x)\int_H\frac{1}{\rho(x)}dh=f(x).
\end{align*}
\item Let $f\in\mathcal{C}_c(G:H)$ and $T_H(f)=0$. Therefore, $T_H(f)_\pi=0$ and also invoking (2) we get
$$f=T_H(f)_\pi=0.$$
\end{enumerate}
\end{proof}


Now we are in the position to define an appropriate well-defined convolution on $\mathcal{C}_c(G/H)$ using the linear map $T_H$.
Let $*:\mathcal{C}_c(G/H)\times\mathcal{C}_c(G/H)\to\mathcal{C}_c(G/H)$ be given by
\begin{equation}\label{1.1}
(\varphi,\psi)\mapsto\varphi\ast\psi:=T_H(\varphi_{\pi}\ast\psi_{\pi}),
\end{equation}
for all $\varphi,\psi\in\mathcal{C}_c(G/H)$ where $\varphi_{\pi}\ast\psi_{\pi}$ is the standard convolution of functions $\varphi_{\pi},\psi_{\pi}$ in $L^1(G)$.

It can be easily checked that (\ref{1.1}) defines a well-defined bilinear map. Using Proposition \ref{1}, the convolution defined in
(\ref{1.1}) satisfies
\begin{equation}\label{1.1.1}
[\varphi\ast\psi]_{\pi}=\varphi_{\pi}\ast\psi_{\pi},
\end{equation}
for all $\varphi,\psi\in \mathcal{C}_c(G/H)$. Because, $[\varphi\ast\psi]_{\pi}$ and $\varphi_{\pi}\ast\psi_{\pi}$ belong to $\mathcal{C}_c(G:H)$ and also we have
\begin{equation}
T_H([\varphi\ast\psi]_{\pi})=T_H(\varphi_{\pi}\ast\psi_{\pi}),
\end{equation}
which implies (\ref{1.1.1}).


Next proposition states a worthwhile property of the convolution defined in (\ref{1.1}).


\begin{proposition}\label{1.4}
{\it Let $H$ be a compact subgroup of a locally compact group $G$ and also let $\mu$ be a relatively invariant measure on $G/H$ which arises from a rho-function $\rho$. Then, for all $\varphi,\psi\in \mathcal{C}_c(G/H)$ the convolution defined in (\ref{1.1}) satisfies
\begin{equation}\label{1.5}
\varphi\ast\psi=T_H(\varphi_{\pi}\ast g),
\end{equation}
for all $g\in \mathcal{C}_c(G)$ with $T_H(g)=\psi$.}
\end{proposition}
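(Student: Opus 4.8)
The plan is to reduce \eqref{1.5} to the single fact that $T_H$ annihilates $\varphi_\pi\ast k$ for any $k\in\mathcal{C}_c(G)$ with $T_H(k)=0$. First I would fix $g\in\mathcal{C}_c(G)$ with $T_H(g)=\psi$ and set $k:=g-\psi_\pi\in\mathcal{C}_c(G)$; since $T_H(\psi_\pi)=\psi$ (this is contained in Proposition \ref{1}), we get $T_H(k)=0$. I would then observe that $T_H(k)(zH)=\rho(z)^{-1}P_H(k)(zH)$, because $H$ is compact with normalized Haar measure and hence $\rho(zh)=\rho(z)$, so $T_H(k)=0$ is the same as $P_H(k)\equiv 0$ on $G/H$. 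As $\varphi\ast\psi=T_H(\varphi_\pi\ast\psi_\pi)$ by \eqref{1.1}, establishing $T_H(\varphi_\pi\ast k)=0$ will give $T_H(\varphi_\pi\ast g)=T_H(\varphi_\pi\ast\psi_\pi)=\varphi\ast\psi$, which is exactly \eqref{1.5}.

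The computation of $T_H(\varphi_\pi\ast k)$ is where compactness of $H$ is used a second time, through $\rho(xh)=\rho(x)$. For $x\in G$,
\begin{align*}
T_H(\varphi_\pi\ast k)(xH)&=\int_H\frac{(\varphi_\pi\ast k)(xh)}{\rho(xh)}\,dh=\frac{1}{\rho(x)}\int_H\!\!\int_G\varphi_\pi(y)\,k(y^{-1}xh)\,dy\,dh\\
&=\frac{1}{\rho(x)}\int_G\varphi_\pi(y)\Big(\int_Hk(y^{-1}xh)\,dh\Big)dy=\frac{1}{\rho(x)}\int_G\varphi_\pi(y)\,P_H(k)(y^{-1}xH)\,dy=0,
\end{align*}
the interchange of the two integrals being justified by the compact supports of $\varphi_\pi$ and $k$ together with continuity. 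An essentially equivalent, slightly more direct route avoids $k$ altogether: the same Fubini step yields $T_H(\varphi_\pi\ast g)(xH)=\rho(x)^{-1}\int_G\varphi_\pi(y)P_H(g)(y^{-1}xH)\,dy$, and then $P_H(g)(zH)=\rho(z)T_H(g)(zH)=\rho(z)\psi(zH)=\psi_\pi(z)$ turns the right-hand side into $\rho(x)^{-1}(\varphi_\pi\ast\psi_\pi)(x)=T_H(\varphi_\pi\ast\psi_\pi)(xH)$, where the last equality uses that $\varphi_\pi\ast\psi_\pi\in\mathcal{C}_c(G:H)$ and hence $T_H(F)(xH)=F(x)/\rho(x)$ for such $F$.

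I do not expect a serious obstacle: the statement is at bottom a Fubini argument. The two points to be careful about are (i) that the swap of $\int_G$ and $\int_H$ is legitimate — it is, since everything in sight is continuous with compact support — and (ii) that pulling $\rho(xh)^{-1}$ out of the $G$-integral as $\rho(x)^{-1}$ relies precisely on the standing hypothesis that $H$ is compact (equivalently $\Delta_G|_H=\Delta_H$ and the rho-function being right $H$-invariant). Neither \eqref{hp} nor the injectivity of $T_H|_{\mathcal{C}_c(G:H)}$ is needed for this proposition, although the latter is what made the convolution \eqref{1.1} well defined in the first place.
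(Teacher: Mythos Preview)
Your argument is correct and, at bottom, is the same Fubini computation the paper carries out on $T_H(\varphi_\pi\ast g)$; the only real difference is how the $\rho$--factor is handled. The paper uses the multiplicativity \eqref{hp} to rewrite $\rho(xh)^{-1}=\rho(e)\rho(y)^{-1}\rho(y^{-1}xh)^{-1}$, so that the inner $H$--integral becomes $T_H(g)(y^{-1}xH)=\psi(y^{-1}xH)$ directly, and then unwinds with \eqref{hp} again to reach $\rho(x^{-1})\rho(e)^{-2}\,\varphi_\pi\ast\psi_\pi(x)$. You instead pull out $\rho(x)^{-1}$ at once via the right $H$--invariance $\rho(xh)=\rho(x)$, recognize the inner integral as $P_H(g)(y^{-1}xH)=\rho(y^{-1}x)\psi(y^{-1}xH)=\psi_\pi(y^{-1}x)$, and land on $\rho(x)^{-1}(\varphi_\pi\ast\psi_\pi)(x)$. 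Your remark that \eqref{hp} is not needed is accurate; the right $H$--invariance of $\rho$ (which holds because $H$ is compact) suffices, and this makes your bookkeeping slightly cleaner than the paper's. Both routes are equivalent reorganizations of the same identity.
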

\begin{proof}
Let $\varphi,\psi\in \mathcal{C}_c(G/H)$ and also $g\in\mathcal{C}_c(G)$ with $T_H(g)=\psi$. Then, for all $x\in G$ we have
\begin{align*}
T_H(\varphi_{\pi}\ast g)(xH)&=\int_H\frac{\varphi_{\pi}\ast g(xh)}{\rho(xh)}dh
\\&=\int_H\int_G\varphi_{\pi}(y)g(y^{-1}xh)\frac{\rho(e)}{\rho(y)\rho(y^{-1}xh)}dhdy
\\&=\int_G\frac{\varphi_{\pi}(y)\rho(e)}{\rho(y)}\left(\int_H\frac{g(y^{-1}xh)}{\rho(y^{-1}xh)}dh\right)dy
\\&=\int_G\frac{\varphi_{\pi}(y)\rho(e)}{\rho(y)}T_H(g)(y^{-1}xH)dy
\\&=\int_G\frac{\varphi_{\pi}(y)\rho(e)}{\rho(y)}\psi(y^{-1}xH)dy
\\&={\rho(e)^{-1}}\int_G{\varphi_{\pi}(y)}\rho(y^{-1})\psi(y^{-1}xH)dy
\\&={\rho(x^{-1})}{\rho(e)^{-2}}\int_G{\varphi_{\pi}(y)}\rho(y^{-1}x)\psi(y^{-1}xH)dy
\\&={\rho(x^{-1})}{\rho(e)^{-2}}\int_G{\varphi_{\pi}(y)}\psi_{\pi}(y^{-1}x)dy=\rho(x^{-1}){\rho(e)^{-2}}\varphi_{\pi}\ast\psi_{\pi}(x).
\end{align*}
Now for all $x\in G$ we achieve
\begin{align*}
[T_H(\varphi_{\pi}\ast g)]_{\pi}(x)&=\rho(x)T_H(\varphi_{\pi}\ast g)(xH)
\\&=\rho(x)\rho(x^{-1})\rho(e)^{-2}\varphi_{\pi}\ast\psi_{\pi}(x)=\varphi_{\pi}\ast\psi_{\pi}(x),
\end{align*}
which clearly implies (\ref{1.5}).
\end{proof}


In the following theorem, we will show that the convolution defined in (\ref{1.1}) can be extended to a convolution on $L^1(G/H,\mu)$ such that  $L^1(G/H,\mu)$ becomes a Banach algebra.


\begin{theorem}\label{2}
Let $H$ be a compact subgroup of a locally compact group $G$ and also let $\mu$ be a relatively invariant measure on $G/H$ which arises from a rho-function $\rho$.
The convolution defined in (\ref{1.1}) can be uniquely extended to a convolution
$$\ast:L^1(G/H,\mu)\times L^1(G/H,\mu)\to L^1(G/H,\mu),$$
such that $L^1(G/H,\mu)$ becomes a Banach algebra.
\end{theorem}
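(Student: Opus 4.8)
The plan is to establish the theorem by a standard density/continuity extension argument, broken into two parts: first showing the convolution on $\mathcal{C}_c(G/H)$ is bounded in the $L^1$-norm, then invoking density of $\mathcal{C}_c(G/H)$ in $L^1(G/H,\mu)$ to extend uniquely, and finally transferring associativity and the algebra axioms through the extension. The key identity to exploit throughout is (\ref{1.1.1}), namely $[\varphi\ast\psi]_\pi=\varphi_\pi\ast\psi_\pi$, which reduces every statement about the new convolution to a statement about ordinary convolution in $L^1(G)$ pulled back through the bijection $\varphi\mapsto\varphi_\pi$ between $\mathcal{C}_c(G/H)$ and $\mathcal{C}_c(G:H)$ from Proposition \ref{1}.

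First I would prove the norm estimate $\|\varphi\ast\psi\|_{L^1(G/H,\mu)}\le C\|\varphi\|_{L^1(G/H,\mu)}\|\psi\|_{L^1(G/H,\mu)}$ for $\varphi,\psi\in\mathcal{C}_c(G/H)$. By the Weil-type formula (\ref{Weil}) applied to $T_H$ and the defining relation $\int_{G/H}P_H(f)\,d\mu=\int_G f\rho\,dx$, together with $\rho(xh)=\rho(x)$ for $h\in H$ (noted at the start of Section 3), one has $\|\varphi\|_{L^1(G/H,\mu)}=\int_{G/H}|\varphi|\,d\mu$ and this can be compared with $\|\varphi_\pi\|_{L^1(G)}=\int_G\rho(x)|\varphi(xH)|\,dx$. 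Actually the cleanest route: since $T_H$ is a contraction on $L^1(G)$, $\|\varphi\ast\psi\|_{L^1(G/H,\mu)}=\|T_H(\varphi_\pi\ast\psi_\pi)\|_{L^1(G/H,\mu)}\le\|\varphi_\pi\ast\psi_\pi\|_{L^1(G)}\le\|\varphi_\pi\|_{L^1(G)}\|\psi_\pi\|_{L^1(G)}$. So it suffices to relate $\|\varphi_\pi\|_{L^1(G)}$ to $\|\varphi\|_{L^1(G/H,\mu)}$; by the hp-identity (\ref{hp}) $\rho(x)=\rho(xh)$ factors as a multiplicative cocycle, and using $\int_{G/H}P_H(g)\,d\mu=\int_G g\rho\,dx$ with $g=|\varphi_\pi|/\rho$ one gets $\|\varphi\|_{L^1(G/H,\mu)}=\int_G|\varphi_\pi(x)|\,dx=\|\varphi_\pi\|_{L^1(G)}$, so the map $\varphi\mapsto\varphi_\pi$ is an isometry $L^1(G/H,\mu)\to L^1(G:H)\subset L^1(G)$. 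This gives the bound with $C=1$, and in fact identifies $L^1(G/H,\mu)$ isometrically with the closed subalgebra $L^1(G:H)$ of $L^1(G)$ under $T_H$ restricted there.

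Next, since $\mathcal{C}_c(G/H)$ is dense in $L^1(G/H,\mu)$ and the convolution is a bounded bilinear map on the dense subspace, it extends uniquely to a bounded bilinear map on all of $L^1(G/H,\mu)$, with the same bound. I would then verify associativity $(\varphi\ast\psi)\ast\chi=\varphi\ast(\psi\ast\chi)$: on $\mathcal{C}_c(G/H)$ this follows by applying the $\pi$-subscript and using (\ref{1.1.1}) twice to reduce to associativity of convolution in $L^1(G)$, namely $[(\varphi\ast\psi)\ast\chi]_\pi=(\varphi_\pi\ast\psi_\pi)\ast\chi_\pi=\varphi_\pi\ast(\psi_\pi\ast\chi_\pi)=[\varphi\ast(\psi\ast\chi)]_\pi$, then injectivity of $\pi$-lift (Proposition \ref{1}(3)); associativity on all of $L^1(G/H,\mu)$ follows by continuity and density. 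Submultiplicativity of the norm on $L^1(G/H,\mu)$ likewise passes to the limit. Completeness of $L^1(G/H,\mu)$ as a Banach space is standard, so $L^1(G/H,\mu)$ is a Banach algebra. Uniqueness of the extension is automatic since a bounded bilinear map on a dense subspace has at most one bounded bilinear extension.

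The main obstacle is the norm comparison in the first step — specifically, verifying carefully that $\varphi\mapsto\varphi_\pi$ is an $L^1$-isometry onto $L^1(G:H)$, which hinges on the precise normalizations ($H$ compact with normalized Haar measure, $\rho$ constant on cosets $xH$) so that the inner integral $\int_H \rho(xh)^{-1}\,dh$ collapses correctly, exactly as in the computations of Proposition \ref{1}. Once that identification is in place, everything else is routine transfer through $T_H$ and the bijection $\varphi\leftrightarrow\varphi_\pi$, and the theorem follows.
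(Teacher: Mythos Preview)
Your proposal is correct and follows essentially the same route as the paper: establish the isometry $\|\varphi\|_{L^1(G/H,\mu)}=\|\varphi_\pi\|_{L^1(G)}$ via the Weil formula, use the contraction $\|T_H\|\le 1$ together with submultiplicativity of convolution in $L^1(G)$ to get $\|\varphi\ast\psi\|_{L^1(G/H,\mu)}\le\|\varphi\|_{L^1(G/H,\mu)}\|\psi\|_{L^1(G/H,\mu)}$ on $\mathcal{C}_c(G/H)$, and extend by density. The paper dispatches associativity in one line by appealing to associativity in $L^1(G)$, whereas you spell out the reduction through (\ref{1.1.1}) and Proposition~\ref{1}(3) explicitly, but this is the same argument.
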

\begin{proof}
Associativity of the standard convolution on $L^1(G)$ guarantees associativity of (\ref{1.1}). Let $\varphi\in\mathcal{C}_c(G/H)$ be arbitrary. Using the Weil's formula (\ref{Weil}) we have
\begin{align*}
\|\varphi\|_{L^1(G/H,\mu)}&=\int_{G/H}|\varphi(xH)|d\mu(xH)
\\&=\int_{G/H}T_H(|\varphi_{\pi}|)(xH)d\mu(xH)
\\&=\int_G|\varphi_{\pi}(x)|dx=\|\varphi_{\pi}\|_{L^1(G)}.
\end{align*}
Now let $\varphi,\psi\in \mathcal{C}_c(G/H)$. Due to (\ref{1.1}) and also preceding calculations we achieve
\begin{align*}
\|\varphi\ast\psi\|_{L^1(G/H,\mu)}&=\|T_H(\varphi_{\pi}\ast\psi_{\pi})\|_{L^1(G/H,\mu)}
\\&\le\|\varphi_{\pi}\ast\psi_{\pi}\|_{L^1(G)}
\\&\le\|\varphi_{\pi}\|_{L^1(G)}\|\psi_{\pi}\|_{L^1(G)}
=\|\varphi\|_{L^1(G/H,\mu)}\|\psi_{\pi}\|_{L^1(G/H,\mu)}.
\end{align*}
Thus, using continuity we can uniquely extend the convolution $\ast:\mathcal{C}_c(G/H)\times\mathcal{C}_c(G/H)\to\mathcal{C}_c(G/H)$ to a convolution $\ast:L^1(G/H,\mu)\times L^1(G/H,\mu)\to L^1(G/h,\mu)$ which still for all $\varphi,\psi\in L^1(G/H,\mu)$ satisfies
\begin{equation}\label{}
\|\varphi\ast\psi\|_{L^1(G/H,\mu)}\le \|\varphi\|_{L^1(G/H,\mu)}\|\psi\|_{L^1(G/H,\mu)}.
\end{equation}
\end{proof}


The following corollary is an immediate consequence of Theorem \ref{2}.

\begin{corollary}
{\it Let $H$ be a compact subgroup of a locally compact group $G$ also let $\mu$ be a relatively invariant measure on $G/H$ which arises from a rho-function.
Then, the linear map
$$T_H:L^1(G:H)\to L^1(G/H,\mu),$$
is an isometric isomorphism.
}\end{corollary}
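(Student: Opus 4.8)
The plan is to verify the three defining properties of an isometric isomorphism for $T_H$ restricted to $L^1(G:H)$: namely that it is linear, isometric, and surjective onto $L^1(G/H,\mu)$. Linearity is inherited from $T_H:L^1(G)\to L^1(G/H,\mu)$, so nothing needs to be said there. The isometry property is where the main work sits, and I would base it on the identity $\|\varphi\|_{L^1(G/H,\mu)}=\|\varphi_\pi\|_{L^1(G)}$ established inside the proof of Theorem \ref{2}. More precisely, for $f\in\mathcal{C}_c(G:H)$, Proposition \ref{1}(2) gives $f=\varphi_\pi$ where $\varphi=T_H(f)\in\mathcal{C}_c(G/H)$, and then the Weil-formula computation $\|T_H(f)\|_{L^1(G/H,\mu)}=\|\varphi\|_{L^1(G/H,\mu)}=\|\varphi_\pi\|_{L^1(G)}=\|f\|_{L^1(G)}$ shows $T_H$ is isometric on the dense subspace $\mathcal{C}_c(G:H)$ of $L^1(G:H)$.

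Next I would pass from the dense subspace to all of $L^1(G:H)$. Since $T_H$ is bounded (norm $\le 1$) and isometric on the $\|\cdot\|_{L^1(G)}$-dense subspace $\mathcal{C}_c(G:H)$ of $L^1(G:H)$, a standard density/continuity argument extends the isometry to the closure $L^1(G:H)$: given $f\in L^1(G:H)$ pick $f_n\in\mathcal{C}_c(G:H)$ with $f_n\to f$ in $L^1(G)$, and then $\|T_H(f)\|_{L^1(G/H,\mu)}=\lim_n\|T_H(f_n)\|_{L^1(G/H,\mu)}=\lim_n\|f_n\|_{L^1(G)}=\|f\|_{L^1(G)}$. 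In particular $T_H|_{L^1(G:H)}$ is injective.

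Finally, for surjectivity I would argue that the image $T_H(L^1(G:H))$ is both dense and closed in $L^1(G/H,\mu)$. Density follows because the image already contains $\mathcal{C}_c(G/H)$: indeed by Proposition \ref{1}(1), every $\varphi\in\mathcal{C}_c(G/H)$ equals $T_H(\varphi_\pi)$ with $\varphi_\pi\in\mathcal{C}_c(G:H)$, and $\mathcal{C}_c(G/H)$ is $\|\cdot\|_{L^1(G/H,\mu)}$-dense in $L^1(G/H,\mu)$. Closedness follows from the fact that an isometric image of a complete space is complete, hence closed: $L^1(G:H)$ is a closed subspace of the Banach space $L^1(G)$, so it is complete, and its isometric image under $T_H$ is therefore a complete, hence closed, subspace of $L^1(G/H,\mu)$. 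A dense closed subspace is the whole space, so $T_H(L^1(G:H))=L^1(G/H,\mu)$, which completes the proof. I do not expect any genuine obstacle here; the only point requiring a little care is confirming that the isometry persists under the density extension, but this is routine once the computation on $\mathcal{C}_c(G:H)$ is in hand.
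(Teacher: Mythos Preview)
Your proof is correct and is exactly the detailed unpacking the paper omits: the paper states the corollary without proof, calling it ``an immediate consequence of Theorem~\ref{2}'', and your argument uses precisely the isometry identity $\|\varphi\|_{L^1(G/H,\mu)}=\|\varphi_\pi\|_{L^1(G)}$ from that proof together with Proposition~\ref{1} to pass from $\mathcal{C}_c(G:H)$ to $L^1(G:H)$ and obtain the isometric linear bijection. If one reads ``isomorphism'' as Banach-\emph{algebra} isomorphism (which the placement right after Theorem~\ref{2} invites), the missing multiplicativity $T_H(f\ast g)=T_H(f)\ast T_H(g)$ on $\mathcal{C}_c(G:H)$ is a one-line addition: by Proposition~\ref{1}(2) one has $f=[T_Hf]_\pi$ and $g=[T_Hg]_\pi$, so $T_H(f)\ast T_H(g)=T_H\bigl([T_Hf]_\pi\ast[T_Hg]_\pi\bigr)=T_H(f\ast g)$ directly from definition~(\ref{1.1}).
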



We can also deduce the following property of the convolution and also left translations on $L^1(G/H,\mu)$ for a $G$-invariant measure $\mu$ on $G/H$.


\begin{corollary}\label{LT0}
{\it Let $H$ be a compact subgroup of a locally compact group $G$ and also $\mu$ be a $G$-invariant measure on $G/H$. Then, for all $x\in G$ and $\varphi,\psi\in L^1(G/H,\mu)$ we have
\begin{equation}\label{LT}
L_x(\varphi\ast\psi)=(L_x\varphi)\ast\psi.
\end{equation}
}\end{corollary}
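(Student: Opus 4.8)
The plan is to prove the identity first for $\varphi,\psi\in\mathcal{C}_c(G/H)$, by unwinding the definition of the convolution, and then to extend it to $L^1(G/H,\mu)$ by density and continuity. Throughout I use the following consequences of $\mu$ being $G$-invariant, i.e.\ of $\mu$ arising from the rho-function $\rho=1$: first, $\varphi_\pi=\varphi\circ\pi$ for every $\varphi$; second, the operator $T_H:L^1(G)\to L^1(G/H,\mu)$ commutes with left translations, as recalled in Section 2; and third, $(L_x\varphi)_\pi=L_x(\varphi_\pi)$ for all $x\in G$, since $(L_x\varphi)_\pi(y)=\varphi(x^{-1}yH)=\varphi_\pi(x^{-1}y)=L_x(\varphi_\pi)(y)$ for every $y\in G$.

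Fix $x\in G$ and $\varphi,\psi\in\mathcal{C}_c(G/H)$. Using the definition (\ref{1.1}) of $\ast$, the fact that $T_H$ intertwines the left translations, the standard identity $L_x(f\ast g)=(L_xf)\ast g$ on $L^1(G)$, and the third point above, I compute
\begin{align*}
L_x(\varphi\ast\psi)&=L_x\,T_H(\varphi_\pi\ast\psi_\pi)
=T_H\bigl(L_x(\varphi_\pi\ast\psi_\pi)\bigr)\\
&=T_H\bigl((L_x\varphi_\pi)\ast\psi_\pi\bigr)
=T_H\bigl((L_x\varphi)_\pi\ast\psi_\pi\bigr)
=(L_x\varphi)\ast\psi,
\end{align*}
which is (\ref{LT}) for $\varphi,\psi\in\mathcal{C}_c(G/H)$.

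To pass to general $\varphi,\psi\in L^1(G/H,\mu)$, fix first $\psi\in\mathcal{C}_c(G/H)$. Both sides of (\ref{LT}) are bounded linear maps of $\varphi$: the left side is right convolution by $\psi$ (bounded by Theorem \ref{2}) followed by the isometry $L_x$ of $L^1(G/H,\mu)$, and the right side is $L_x$ followed by right convolution by $\psi$. Since they agree on the dense subspace $\mathcal{C}_c(G/H)$, they agree on all of $L^1(G/H,\mu)$. Then, fixing $\varphi\in L^1(G/H,\mu)$ and running the same continuity-density argument in the variable $\psi$, one obtains (\ref{LT}) for all $\varphi,\psi\in L^1(G/H,\mu)$.

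I do not expect a real obstacle here; the only place the hypothesis is genuinely used --- as opposed to mere relative invariance --- is the intertwining of $T_H$ with left translations together with the identity $(L_x\varphi)_\pi=L_x(\varphi_\pi)$, both of which force $\rho$ to be constant, hence $\rho=1$. For a merely relatively invariant $\mu$, stray factors of the form $\rho(x^{-1})\rho(e)^{-1}$ would survive the computation above and (\ref{LT}) would fail in general.
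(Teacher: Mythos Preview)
Your proof is correct and follows essentially the same chain of identities as the paper: commute $L_x$ past $T_H$, use $L_x(f\ast g)=(L_xf)\ast g$ in $L^1(G)$, and then $(L_x\varphi)_\pi=L_x(\varphi_\pi)$. The only difference is that the paper applies this computation directly for $\varphi,\psi\in L^1(G/H,\mu)$ (since the defining formula $\varphi\ast\psi=T_H(\varphi_\pi\ast\psi_\pi)$ extends by continuity), whereas you first work in $\mathcal{C}_c(G/H)$ and then add an explicit density argument; this extra step is harmless but not needed.
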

\begin{proof}
If $\mu$ is a $G$-invariant measure on $G/H$ we have $T_H=P_H$. Therefore, left translations commute $T_H$. Thus, for all $x\in G$ and also $\varphi,\psi\in L^1(G/H,\mu)$ we have
\begin{align*}
L_x(\varphi\ast\psi)&=L_x\left(T_H(\varphi_{\pi}\ast\psi_{\pi})\right)
\\&=T_H\left(L_x(\varphi_{\pi}\ast\psi_{\pi})\right)
\\&=T_H\left(L_x(\varphi_{\pi})\ast\psi_{\pi}\right)
\\&=T_H\left((L_x\varphi)_{\pi}\ast\psi_{\pi}\right)=(L_x\varphi)\ast\psi.
\end{align*}
\end{proof}


In the following proposition we will show that the Banach algebra mentioned in Theorem \ref{2} always possesses a right approximate identity.


\begin{proposition}\label{right}
{\it Let $H$ be a compact subgroup of a locally compact group $G$ and also let $\mu$ be a relatively invariant measure on $G/H$. The Banach algebra $L^1(G/H,\mu)$ possesses a right approximate identity.}
\end{proposition}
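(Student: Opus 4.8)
The plan is to push a bounded approximate identity of $L^1(G)$ forward through the operator $T_H$, using the identity recorded in Proposition~\ref{1.4} to transfer the verification into $L^1(G)$, where the existence of an approximate identity is classical. First I would fix a net $\{u_\alpha\}_{\alpha\in I}$ in $\mathcal{C}_c(G)$ which is a bounded approximate identity for $L^1(G)$; one may take $u_\alpha\ge 0$ with $\|u_\alpha\|_{L^1(G)}=1$ and with $\supp u_\alpha$ ranging over a neighbourhood basis of $e$ shrinking to $\{e\}$ (the standard construction, see \cite{FollH}), so that in particular $\|f\ast u_\alpha-f\|_{L^1(G)}\to 0$ for every $f\in L^1(G)$. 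Put $e_\alpha:=T_H(u_\alpha)$. Since $T_H(g)=P_H(g/\rho)$ for $g\in\mathcal{C}_c(G)$ and $P_H$ maps $\mathcal{C}_c(G)$ onto $\mathcal{C}_c(G/H)$, we have $e_\alpha\in\mathcal{C}_c(G/H)\sset L^1(G/H,\mu)$; and as $\|T_H\|\le 1$ the net is uniformly bounded, $\|e_\alpha\|_{L^1(G/H,\mu)}\le\|u_\alpha\|_{L^1(G)}=1$.

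Next I would verify the right approximate identity property on the dense subspace $\mathcal{C}_c(G/H)$. Fix $\varphi\in\mathcal{C}_c(G/H)$. Because $u_\alpha\in\mathcal{C}_c(G)$ and $T_H(u_\alpha)=e_\alpha$, Proposition~\ref{1.4} applies with $g=u_\alpha$ and gives $\varphi\ast e_\alpha=T_H(\varphi_\pi\ast u_\alpha)$. Now $\varphi_\pi\in\mathcal{C}_c(G:H)\sset L^1(G)$, so $\varphi_\pi\ast u_\alpha\to\varphi_\pi$ in $L^1(G)$; applying the bounded operator $T_H$ and recalling $T_H(\varphi_\pi)=\varphi$ from Proposition~\ref{1}, we obtain
\begin{equation*}
\varphi\ast e_\alpha=T_H(\varphi_\pi\ast u_\alpha)\longrightarrow T_H(\varphi_\pi)=\varphi\qquad\text{in }L^1(G/H,\mu).
\end{equation*}

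Finally I would pass to an arbitrary $\varphi\in L^1(G/H,\mu)$ by the usual three-$\del$ argument. Given $\del>0$, choose $\varphi'\in\mathcal{C}_c(G/H)$ with $\|\varphi-\varphi'\|_{L^1(G/H,\mu)}<\del/3$; then, using the submultiplicativity of the convolution from Theorem~\ref{2} and the uniform bound $\|e_\alpha\|_{L^1(G/H,\mu)}\le 1$,
\begin{align*}
\|\varphi\ast e_\alpha-\varphi\|_{L^1(G/H,\mu)}
&\le\|(\varphi-\varphi')\ast e_\alpha\|_{L^1(G/H,\mu)}+\|\varphi'\ast e_\alpha-\varphi'\|_{L^1(G/H,\mu)}+\|\varphi'-\varphi\|_{L^1(G/H,\mu)}\\
&\le 2\|\varphi-\varphi'\|_{L^1(G/H,\mu)}+\|\varphi'\ast e_\alpha-\varphi'\|_{L^1(G/H,\mu)},
\end{align*}
which is $<\del$ for all sufficiently large $\alpha$ by the previous paragraph. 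Hence $\{e_\alpha\}$ is a right approximate identity for $L^1(G/H,\mu)$.

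There is no genuine obstacle; the one step carrying the weight is the reduction $\varphi\ast e_\alpha=T_H(\varphi_\pi\ast u_\alpha)$, which is exactly the content of Proposition~\ref{1.4} and lets us import the classical approximate identity of $L^1(G)$. I would also point out why this method yields only a right (and not a two-sided) approximate identity: in the defining formula $\varphi\ast\psi=T_H(\varphi_\pi\ast g)$ the lift of the left factor is pinned to the canonical choice $\varphi_\pi$ while the lift of the right factor is free, so one can replace $g$ by the group approximate identity $u_\alpha$ but cannot perform the analogous substitution on the left; this asymmetry mirrors the fact that $L^1(G:H)$, which is isometrically isomorphic to $L^1(G/H,\mu)$, is only a \emph{left} ideal of $L^1(G)$.
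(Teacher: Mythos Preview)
Your proof is correct and follows essentially the same approach as the paper: push a standard approximate identity $\{k_\alpha\}$ of $L^1(G)$ forward via $T_H$ and use Proposition~\ref{1.4} to reduce $\|\varphi\ast T_H(k_\alpha)-\varphi\|$ to $\|\varphi_\pi\ast k_\alpha-\varphi_\pi\|_{L^1(G)}$. The only difference is cosmetic: the paper invokes Proposition~\ref{1.4} directly for all $\varphi\in L^1(G/H,\mu)$ in one line, whereas you (more carefully, since Proposition~\ref{1.4} is stated only for $\mathcal{C}_c(G/H)$) first verify the statement on $\mathcal{C}_c(G/H)$ and then pass to general $\varphi$ by a three-$\varepsilon$ density argument.
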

\begin{proof}
Let $\{k_\alpha\}_{\alpha\in I}$ be an approximate identity for $L^1(G)$ according to the proposition 2.42 of \cite{FollH} and also for all $\alpha\in I$ let $\psi_\alpha:=T_H(k_\alpha)$. Now due to Proposition \ref{1.4} for all $\varphi\in L^1(G/H,\mu)$ we have
\begin{align*}
\lim_{\alpha\in I}\|\varphi\ast\psi_\alpha-\varphi\|_{L^1(G/H,\mu)}
&=\lim_{\alpha\in I}\|T_H(\varphi_{\pi}\ast k_\alpha)-T_H(\varphi_\pi)\|_{L^1(G/H,\mu)}
\\&=\lim_{\alpha\in I}\|T_H(\varphi_{\pi}\ast k_\alpha-\varphi_\pi)\|_{L^1(G/H,\mu)}
\\&\le\lim_{\alpha\in I}\|\varphi_{\pi}\ast k_\alpha-\varphi_{\pi}\|_{L^1(G)}=0.
\end{align*}
\end{proof}


A natural and also unprofessional approach for finding an involution on $\mathcal{C}_c(G/H)$ may candidates $T_H([\varphi_{\pi}]^*)$ as $\varphi^*$,
where $[\varphi_{\pi}]^*$ is the standard involution of $\varphi_{\pi}$ in $L^1(G)$.
Although this definition is well-define but it can be easily checked that it fails to satisfies basic properties of an involution such as $\varphi ^{**}=\varphi$ or anti-homomorphism property.
These problems and challenges occur because, $\mathcal{C}_c(G:H)$ and also $L^1(G:H)$ are not invariant under the standard involution of $L^1(G)$. We recall that, $\mathcal{C}_c(G:H)$ plays an important role on this theory because $T_H|_{\mathcal{C}_c(G:H)}$ is injective. Thus, to define an appropriate involution on $\mathcal{C}_c(G/H)$ we need a technical approach.


In the next theorem we show that if $\mu$ is a $G$-invariant measure on $G/H$, existence of a well-defined involution on the Banach algebra $L^1(G/H,\mu)$ such that $T_H:L^1(G)\to L^1(G/H,\mu)$ becomes a continuous $*$-homomorphism is a necessary and sufficient condition for the subgroup $H$ to be normal in $G$. To this, we need the following lemma.


\begin{lemma}\label{3.F}
Let $H$ be a compact subgroup of a locally compact group $G$ and $\mu$ be a $G$-invariant measure on $G/H$ also let $f\in L^1(G:H)$ with $f-f^*\in\mathcal{J}^1(G,H)$. Then, for all $h\in H$ we have
$L_hf-f\in\mathcal{J}^1(G,H)$.
\end{lemma}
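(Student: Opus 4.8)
The plan is to unwind what the hypotheses say in terms of the operator $T_H$ and the convolution structure, and then exploit the fact that $T_H$ is a $*$-homomorphism in the normal case comparison. First I would observe that since $\mu$ is $G$-invariant we have $T_H = P_H$ and hence $T_H$ commutes with left translations: $T_H(L_h f) = L_h(T_H f)$ for all $h \in H$. Because $f \in L^1(G:H)$ means $R_h f = f$ for all $h \in H$, we have $T_H(L_h f - f) = L_h(T_H f) - T_H f$; but $T_H f \in A^1(G/H,\mu)$, i.e. $L_h(T_H f) = T_H f$ for $h \in H$ (this is exactly the observation made in the discussion of $P(G/H)$ above, applied to the image of $T_H$), so naively $T_H(L_h f - f) = 0$. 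Wait — that would make the conclusion trivial, so the subtlety must be that $T_H f$ need not lie in $A^1$; rather one only knows $R_h f = f$. Let me reconsider: the correct reading is that $T_H(L_h f) = L_h T_H(f)$ holds, and the point of the lemma is genuinely about whether $L_h T_H(f) = T_H(f)$, which is not automatic. So the real content is to use the involution hypothesis $f - f^* \in \mathcal{J}^1(G,H)$, i.e. $T_H(f) = T_H(f^*)$, to force this invariance.

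The key computation I would carry out is to express $T_H(f^*)$ in terms of $f$ and compare it with $T_H(f)$ and with $T_H(L_h f)$. Writing out $T_H(f^*)(xH) = \int_H f^*(xh)\,dh = \int_H \Delta_G((xh)^{-1})\overline{f((xh)^{-1})}\,dh$, and using $\Delta_G|_H = 1$ together with the substitution $h \mapsto$ (suitable element), I would massage this into something like $\int_H \overline{f(h^{-1}x^{-1})}\,dh = \overline{\int_H f(h^{-1}x^{-1})\,dh}$. The integral $\int_H f(h^{-1}x^{-1})\,dh$ is, up to interpretation, $P_H$ applied to the function $y \mapsto f(y^{-1})$ evaluated appropriately, and since $f \in L^1(G:H)$ the right-$H$-invariance of $f$ translates to left-$H$-invariance of $y \mapsto f(y^{-1})$. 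The hypothesis $T_H(f) = T_H(f^*)$ then becomes an identity relating $\int_H f(xh)\,dh$ to $\int_H \overline{f(h^{-1}x^{-1})}\,dh$, and by the right-invariance of $f$ the left side is just $f(x)$'s average, i.e. $\int_H f(xh)\,dh = f(x)$ on a conull set (using $R_h f = f$). Thus the hypothesis reads $f(x) = \overline{\int_H f(h^{-1}x^{-1})\,dh}$ a.e.

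From this identity I would extract the invariance statement. Applying the same identity with $x$ replaced by $xh_0^{-1}$ (or by manipulating the variable inside), I would compare $T_H(L_{h_0}f)(xH)$ with $T_H(f)(xH)$. Concretely, $T_H(L_{h_0}f)(xH) = \int_H (L_{h_0}f)(xh)\,dh = \int_H f(h_0^{-1}xh)\,dh = f(h_0^{-1}x)$ using $R_h f = f$ again. So I must show $f(h_0^{-1}x) = f(x)$ a.e., i.e. $L_{h_0}f = f$, which would actually give the stronger conclusion $L_h f - f = 0$; but the lemma only claims membership in $\mathcal{J}^1(G,H)$, meaning I only need $\int_H f(h_0^{-1}xh)\,dh = \int_H f(xh)\,dh$ a.e.\ in $xH$, equivalently $\int_H [f(h_0^{-1}x) - f(x)]\,dh = 0$ after using right-invariance — which is again automatic unless right-invariance is only "a.e." and the null sets interfere. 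This suggests the honest route: do not use $R_h f = f$ pointwise, but carry $f \in L^1(G:H)$ only in the $L^1$ sense, and derive $T_H(L_{h_0}f) = T_H(f)$ directly from $T_H(f^*) = T_H(f)$ by showing $T_H((L_{h_0}f)^* ) = T_H(f^*)$, i.e. that $(L_{h_0}f)^*$ and $f^*$ have the same $T_H$-image, combined with the trivial fact that $T_H(L_{h_0} f)$ and $T_H(f)$ would then be forced equal by injectivity-type arguments on the relevant invariant subspace.

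The main obstacle, and the step I expect to require the most care, is handling the modular function and the variable changes inside the $H$-integration cleanly: one must use $\Delta_G|_H = \Delta_H = 1$ and the normalization/unimodularity of $H$ to justify replacing $h$ by $h^{-1}$ and by $h_0 h$ under $\int_H \cdot\, dh$, and then identify the resulting averaged expressions as $T_H$ of explicit $L^1$ functions so that the hypothesis $f - f^* \in \mathcal{J}^1(G,H)$ can be fed in. I would organize the proof as: (i) record $T_H = P_H$ and its translation-commutation; (ii) compute $T_H(f^*)$ via Weil-type manipulation into an expression involving $h \mapsto \overline{f(h^{-1}x^{-1})}$; (iii) compute $T_H((L_h f)^*)$ the same way and observe it equals $T_H(f^*)$ up to the $L^1(G:H)$ relation, using that inversion interchanges the left- and right-$H$-invariance and that $L_h f \in L^1(G:H)$ as well; (iv) conclude $T_H(L_h f) = T_H(f)$, i.e. $L_h f - f \in \mathcal{J}^1(G,H)$. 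The crux is step (iii), getting $(L_h f)^*$ and $f^*$ to land on the same $T_H$-fiber using only that $f$'s defect $f - f^*$ already lies in $\mathcal{J}^1(G,H)$.
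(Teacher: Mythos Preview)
Your organized plan (i)--(iv) has a genuine gap at the passage from (iii) to (iv). Step (iii) is indeed true---one computes $(L_h f)^* = \Delta_G(h^{-1})\,R_h f^* = R_h f^*$ since $\Delta_G|_H=1$, and $T_H(R_h f^*)=T_H(f^*)$ holds trivially because $T_H$ averages over right $H$-translates---but precisely for that reason it carries no information: it is true for \emph{any} $f\in L^1(G)$ and never uses the hypothesis $f-f^*\in\mathcal J^1(G,H)$. From $T_H((L_h f)^*)=T_H(f^*)$ and $T_H(f)=T_H(f^*)$ you cannot conclude $T_H(L_h f)=T_H(f)$ without also knowing $T_H(L_h f)=T_H((L_h f)^*)$, which is not given; the ``injectivity-type argument'' you invoke does not supply this, since injectivity of $T_H$ on $L^1(G:H)$ would only help \emph{after} you already had $T_H(L_h f)=T_H(f)$.

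The paper's proof uses the very fact you brush past in your middle paragraph---that inversion swaps left- and right-$H$-invariance---but applies it to $f^*$ rather than to $(L_h f)^*$. Since $f\in L^1(G:H)$ means $f(x^{-1}h)=f(x^{-1})$ a.e., one gets directly
\[
L_h f^*(x)=\Delta_G(x^{-1}h)\,\overline{f(x^{-1}h)}=\Delta_G(x^{-1})\,\overline{f(x^{-1})}=f^*(x),
\]
i.e.\ $L_h f^*=f^*$ as an identity in $L^1(G)$, not merely at the $T_H$ level. With this in hand and the commutation $T_H L_h = L_h T_H$ (since $\mu$ is $G$-invariant), the chain
\[
T_H(L_h f)=L_h T_H(f)=L_h T_H(f^*)=T_H(L_h f^*)=T_H(f^*)=T_H(f)
\]
closes immediately. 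Your proposal contains all the ingredients for this, but the decisive computation is applied to the wrong object.
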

\begin{proof}
Let $f\in L^1(G:H)$ with $f-f^*\in\mathcal{J}^1(G,H)$. Since $f\in L^1(G:H)$ and
also due to compactness of $H$ we achieve $L_hf^*=f^*$ in $L^1(G)$ for all $h\in H$. Because, for all $h\in H$ and also a.e $x\in G$ we have
\begin{align*}
L_hf^*(x)&=f^*(h^{-1}x)
\\&=\Delta_G(x^{-1}h)\overline{f(x^{-1}h)}
\\&=\Delta_G(x^{-1})\overline{f(x^{-1})}
=f^*(x).
\end{align*}
Now since $T_H$ commutes with left translations, for all $h\in H$ we get
\begin{align*}
T_H(L_hf)&=L_hT_H(f)
\\&=L_hT_H(f^*)
\\&=T_H(L_hf^*)
\\&=T_H(f^*)=T_H(f).
\end{align*}
\end{proof}


\begin{theorem}\label{3}
Let $H$ be a compact subgroup of a locally compact group $G$ and also let $\mu$ be a $G$-invariant measure on $G/H$.
There is a well-defined involution on the Banach algebra $L^1(G/H,\mu)$ such that the linear map
$$T_H:L^1(G)\to L^1(G/H,\mu),$$
becomes a surjective continuous $*$-homomorphism if and only if $H$ is a normal subgroup of $G$.
\end{theorem}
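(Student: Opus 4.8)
The plan is to prove the two implications separately. The ``if'' direction is essentially the classical fact that $T_H=P_H$ is a $*$-homomorphism when $H$ is normal, and the ``only if'' direction is where Lemma \ref{3.F} carries the load; there the main point is a reduction to $\star$-self-adjoint elements.

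\emph{Sufficiency.} Suppose $H$ is normal. Then $G/H$ is a locally compact group and, $H$ being compact, we may take $\rho\equiv 1$, so that $\mu$ is a Haar measure of $G/H$ and $T_H=P_H$. I would first check that the convolution (\ref{1.1}) agrees with the group convolution $\ast_{G/H}$ of $L^1(G/H)$: for $\varphi,\psi\in\mathcal{C}_c(G/H)$ one has $\varphi_\pi=\varphi\circ\pi$ and $P_H(\varphi\circ\pi)=\varphi$, so, using that $T_H=P_H$ is an algebra homomorphism onto $L^1(G/H)$ (Theorem 3.5.4 of \cite{50}),
$$\varphi\ast\psi=T_H(\varphi_\pi\ast\psi_\pi)=P_H(\varphi_\pi)\ast_{G/H}P_H(\psi_\pi)=\varphi\ast_{G/H}\psi,$$
and joint continuity of both products propagates this to all of $L^1(G/H,\mu)$. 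Equipping $L^1(G/H,\mu)$ with the standard involution of the group algebra $L^1(G/H)$ then makes $T_H=P_H$ a $*$-homomorphism, again by Theorem 3.5.4 of \cite{50}, and it is surjective with $\|T_H\|\le 1$; this settles the direction.

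\emph{Necessity.} Assume there is a well-defined involution $\star$ on $L^1(G/H,\mu)$ for which $T_H$ is a $*$-homomorphism, i.e.\ $T_H(f^*)=(T_H f)^\star$ for every $f\in L^1(G)$. The aim is to show that the lift $\varphi_\pi\in L^1(G:H)$ is left $H$-invariant for every $\varphi\in L^1(G/H,\mu)$. First let $\chi\in L^1(G/H,\mu)$ be $\star$-self-adjoint and set $f:=\chi_\pi\in L^1(G:H)$, so $T_H(f)=\chi$ by Proposition \ref{1}; then
$$T_H(f^*)=(T_H f)^\star=\chi^\star=\chi=T_H(f),$$
hence $f-f^*\in\mathcal{J}^1(G,H)$, and Lemma \ref{3.F} gives $L_h f-f\in\mathcal{J}^1(G,H)$ for all $h\in H$. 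Since $L_h f$ and $f$ both lie in $L^1(G:H)$ and $T_H$ is injective there, $L_h f=f$, i.e.\ $\chi_\pi$ is left $H$-invariant. Next, every $\varphi\in L^1(G/H,\mu)$ decomposes as $\varphi=\tfrac12(\varphi+\varphi^\star)+i\cdot\tfrac1{2i}(\varphi-\varphi^\star)$ into $\star$-self-adjoint summands, and $\varphi\mapsto\varphi_\pi$ is complex-linear, so $\varphi_\pi$ is left $H$-invariant for every $\varphi$. Specializing to $\varphi\in\mathcal{C}_c(G/H)$, where $\varphi_\pi=\varphi\circ\pi$, this reads $\varphi(hxH)=\varphi(xH)$ for all $h\in H$, $x\in G$; since $\mathcal{C}_c(G/H)$ separates points of $G/H$ we get $hxH=xH$, i.e.\ $x^{-1}Hx\subseteq H$ for all $x\in G$, and replacing $x$ by $x^{-1}$ yields $x^{-1}Hx=H$. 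Hence $H$ is normal.

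The only genuine obstacle is the passage from $\star$-self-adjoint elements to arbitrary ones: Lemma \ref{3.F} can be fed a lift $f=\chi_\pi$ only once we know $f-f^*\in\mathcal{J}^1(G,H)$, which holds precisely when $\chi=T_H(f)$ is $\star$-self-adjoint, and the general case must then be recovered purely algebraically, from the decomposition into self-adjoint parts together with the linearity of $\varphi\mapsto\varphi_\pi$ and of the condition ``left $H$-invariant''; no use of the anti-multiplicativity of $\star$ is needed. The auxiliary facts used---that the $G$-invariance of $\mu$ lets one take $\rho\equiv 1$, so that $T_H=P_H$ commutes with left translations (as Lemma \ref{3.F} requires), and that $T_H$ restricts to an isometric isomorphism of $L^1(G:H)$ onto $L^1(G/H,\mu)$---are already available from the material preceding the statement.
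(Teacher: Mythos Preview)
Your argument is correct, and the necessity direction follows a genuinely different route from the paper's. The paper takes a self-adjoint two-sided approximate identity $\{k_\alpha\}$ of $L^1(G)$, pushes it down to $\psi_\alpha:=T_H(k_\alpha)$, uses the assumed \emph{algebra}-homomorphism property of $T_H$ to show $\{\psi_\alpha\}$ is a left approximate identity, applies Lemma \ref{3.F} to each $(\psi_\alpha)_\pi$ to obtain $L_h\psi_\alpha=\psi_\alpha$, and then propagates this to all $\varphi$ via $L_h(\psi_\alpha\ast\varphi)=(L_h\psi_\alpha)\ast\varphi$ (Corollary \ref{LT0}) and a limit. You instead feed Lemma \ref{3.F} directly with $f=\chi_\pi$ for every $\star$-self-adjoint $\chi$, use the injectivity of $T_H$ on $L^1(G:H)$ to get $L_hf=f$, and then reach arbitrary $\varphi$ by the linear decomposition into self-adjoint parts. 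Your route is more elementary: it uses neither approximate identities nor Corollary \ref{LT0}, and in fact it only uses the $\ast$-preservation $T_H(f^*)=(T_Hf)^\star$, not the multiplicative part of the $*$-homomorphism hypothesis. The paper's approach, on the other hand, exploits the full hypothesis and produces the left approximate identity as a by-product. One small point worth making explicit in your write-up: after the self-adjoint decomposition you only know $L_h\varphi_\pi=\varphi_\pi$ as an equality in $L^1(G)$; the pointwise identity $\varphi(hxH)=\varphi(xH)$ then follows because for $\varphi\in\mathcal{C}_c(G/H)$ both sides are continuous and Haar measure has full support.
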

\begin{proof}
Assume that there is a well-defined involution on the Banach algebra $L^1(G/H,\mu)$ such that the linear map $T_H:L^1(G)\to L^1(G/H,\mu)$ becomes a continuous $*$-homomorphism and also let $\{k_\alpha\}_{\alpha\in I}$ be an invariant under involution two-sided approximate identity for $L^1(G)$. Now, for all $\alpha\in I$ let $\psi_\alpha:=T_H(k_\alpha)$. Proposition \ref{right} implies that $\{\psi_\alpha\}_{\alpha\in I}$ is a right approximate identity. It is also a left approximate identity, because for all $\varphi\in L^1(G/H,\mu)$ and also $f\in L^1(G)$ with $T_H(f)=\varphi$ we have
\begin{align*}
\lim_{\alpha\in I}\|\psi_\alpha\ast\varphi-\varphi\|_{L^1(G/H,\mu)}
&=\lim_{\alpha\in I}\|T_H(k_\alpha)\ast T_H(f)-T_H(f)\|_{L^1(G/H,\mu)}
\\&=\lim_{\alpha\in I}\|T_H(k_\alpha\ast f-f)\|_{L^1(G/H,\mu)}
\\&\le\lim_{\alpha\in I}\|k_\alpha\ast f-f\|_{L^1(G)}=0.
\end{align*}
Note that, since $T_H$ is a $*$-homomorphism $\{\psi_\alpha\}_{\alpha\in I}$ is an invariant under involution approximate identity and also for $\alpha\in I$
we have $[\psi_\alpha]_{\pi}=\psi_\alpha\circ\pi$. Thus, for all $\alpha\in I$ we get
$[\psi_\alpha]_{\pi}\in L^1(G:H)$ and since $\psi_\alpha=\psi_\alpha^*$ we also have $[\psi_\alpha]_{\pi}-[\psi_\alpha]_{\pi}^*\in\mathcal{J}^1(G,H)$. Because,
\begin{align*}
T_H([\psi_\alpha]_{\pi}^*)&=T_H([\psi_\alpha]_{\pi})^*\\&=\psi_\alpha^*
\\&=\psi_\alpha=T_H([\psi_\alpha]_{\pi}).
\end{align*}
Therefore, for all $\alpha\in I$, Lemma \ref{3.F} works and for all $h\in H$ we have $L_h[\psi_\alpha]_{\pi}-[\psi_\alpha]_{\pi}\in \mathcal{J}^1(G,H)$
which implies that $L_h\psi_\alpha=\psi_\alpha$ for all $h\in H$.
Now let $\varphi\in\mathcal{C}_c(G/H)$ be arbitrary and also $h\in H$. Using preceding calculations, continuity of $L_h$, (\ref{LT})
and also since $T_H$ commutes $L_h$
we get
\begin{align*}
L_h\varphi
&=L_h\left(\lim_{\alpha\in I}\psi_\alpha\ast\varphi\right)
\\&=\lim_{\alpha\in I}L_h\left(\psi_\alpha\ast\varphi\right)
\\&=\lim_{\alpha\in I}\left(L_h\psi_\alpha\right)\ast\varphi
\\&=\lim_{\alpha\in I}\psi_\alpha\ast\varphi=\varphi.
\end{align*}
Thus, we achieve that $\varphi(hxH)=\varphi(xH)$ for all $x\in G$ and also all $h\in H$. Since $\mathcal{C}_c(G/H)$ separates points of $G/H$
we deduced that $hxH=xH$ for all $h\in H$ and $x\in G$, which implies that $H$ is a normal subgroup of $G$.
If $H$ is a normal subgroup, The fact that $G/H$ is a locally compact group and Theorem 3.5.4 of \cite{50} guarantee existence of a well-defined involution on $L^1(G/H,\mu)$
and also the $*$-homomorphism property of $T_H$.
\end{proof}


In the following corollary we also deduce that if we replace the convolution given in (\ref{1.1}) by any well-defined convolution such that $T_H$ still be a continuous $*$-homomorphism, then $H$ is automatically normal.


\begin{corollary}
{\it Let $H$ be a compact subgroup of a locally compact group $G$ and also let $\mu$ be a $G$-invariant measure on $G/H$.
There is a well-defined convolution and involution on $L^1(G/H,\mu)$ such that the linear map
$$T_H:L^1(G)\to L^1(G/H,\mu),$$
becomes a continuous $*$-homomorphism if and only if $H$ is a normal subgroup of $G$.
}\end{corollary}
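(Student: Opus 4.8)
The plan is to reduce the "only if" direction to the proof of Theorem \ref{3}, which goes through essentially word for word once the two steps there that invoked the particular convolution (\ref{1.1}) are re-established for an arbitrary admissible convolution; the "if" direction is immediate. Indeed, if $H$ is normal then $G/H$ is a locally compact group whose Haar measure is $G$-invariant and arises from $\rho=1$, and by Theorem 3.5.4 of \cite{50} the map $T_H\colon L^1(G)\to L^1(G/H,\mu)$ is a surjective continuous $*$-homomorphism when $L^1(G/H,\mu)$ carries the usual convolution and involution coming from the group $G/H$; nothing further is needed there.

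For the converse, assume $L^1(G/H,\mu)$ carries a well-defined convolution $\ast$ and involution for which $T_H$ is a continuous $*$-homomorphism. Since $T_H$ is automatically surjective, the convolution is forced to obey $T_H(f)\ast T_H(g)=T_H(f\ast g)$, and this single relation is all I would use. Fix an invariant-under-involution approximate identity $\{k_\alpha\}_{\alpha\in I}$ for $L^1(G)$ with each $k_\alpha\in\mathcal{C}_c(G)$, and set $\psi_\alpha:=T_H(k_\alpha)\in\mathcal{C}_c(G/H)$. First, in place of Proposition \ref{right}: for $\varphi=T_H(f)$ one has $\psi_\alpha\ast\varphi=T_H(k_\alpha\ast f)\to T_H(f)=\varphi$ by continuity of $T_H$, and symmetrically on the right, so $\{\psi_\alpha\}$ is a two-sided approximate identity for $\bigl(L^1(G/H,\mu),\ast\bigr)$; moreover $\psi_\alpha=\psi_\alpha^*$ because $T_H$ preserves the involution. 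From here, exactly as in the proof of Theorem \ref{3}, the equality $\rho=1$ gives $[\psi_\alpha]_\pi=\psi_\alpha\circ\pi\in L^1(G:H)$ with $[\psi_\alpha]_\pi-[\psi_\alpha]_\pi^*\in\mathcal{J}^1(G,H)$, so that Lemma \ref{3.F}, together with the fact that $T_H$ commutes with left translations, yields $L_h\psi_\alpha=\psi_\alpha$ for every $h\in H$.

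Second, in place of Corollary \ref{LT0}: for $\varphi=T_H(f)\in\mathcal{C}_c(G/H)$ and $h\in H$,
\[
L_h(\psi_\alpha\ast\varphi)=L_hT_H(k_\alpha\ast f)=T_H\bigl(L_h(k_\alpha\ast f)\bigr)=T_H\bigl((L_hk_\alpha)\ast f\bigr)=(L_h\psi_\alpha)\ast\varphi,
\]
using in turn that $T_H$ commutes with $L_h$, the identity $L_h(k_\alpha\ast f)=(L_hk_\alpha)\ast f$ in $L^1(G)$, and the homomorphism property. Combining this with $L_h\psi_\alpha=\psi_\alpha$, the continuity of $L_h$ on $L^1(G/H,\mu)$, and the approximate-identity property, I get $L_h\varphi=\lim_\alpha(L_h\psi_\alpha)\ast\varphi=\lim_\alpha\psi_\alpha\ast\varphi=\varphi$. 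Hence $\varphi(hxH)=\varphi(xH)$ for all $x\in G$, $h\in H$ and $\varphi\in\mathcal{C}_c(G/H)$; since $\mathcal{C}_c(G/H)$ separates the points of $G/H$, this forces $hxH=xH$ for all $h\in H$ and $x\in G$, i.e. $H$ is a normal subgroup of $G$.

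I do not expect a genuine obstacle here: the corollary is precisely the observation that every step of Theorem \ref{3} that invoked (\ref{1.1}) rests only on the relation $T_H(f)\ast T_H(g)=T_H(f\ast g)$, which is part of the hypothesis. The one thing to be careful about is not to slip in some extra property enjoyed by the specific convolution (\ref{1.1}) but not by an arbitrary one; surjectivity of $T_H$ is what rules this out, since it pins the convolution down to $T_H(f\ast g)$ and thereby makes both of the substitutes above legitimate.
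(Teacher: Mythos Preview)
Your proposal is correct and is exactly the argument the paper intends: the Corollary is stated without proof as an immediate consequence of Theorem \ref{3}, and your write-up makes explicit precisely the two places (Proposition \ref{right} and Corollary \ref{LT0}) where the specific convolution (\ref{1.1}) was invoked, replacing each by a direct appeal to the homomorphism identity $T_H(f)\ast T_H(g)=T_H(f\ast g)$ together with the fact that $T_H$ commutes with left translations. There is nothing to add.
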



In the sequel we find an appropriate definition of the convolution (\ref{1.1}) for other $L^p$-spaces related to homogeneous spaces.
First we need a generalized notation of the linear map $T_H$ for other $L^p$-spaces.


\begin{proposition}\label{3.1}
{\it Let $H$ be a compact subgroup of a locally compact group $G$, also let $\mu$ be a $G$-invariant measure on $G/H$ and $p\ge 1$. The linear map $T_H:\mathcal{C}_c(G)\to\mathcal{C}_c(G/H)$ has a unique extension to a bounded linear map from $L^p(G)$ onto $L^p(G/H,\mu)$.
}\end{proposition}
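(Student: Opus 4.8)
The plan is to extend $T_H$ by a density-and-continuity argument, with the main work being the uniform bound $\|T_H f\|_{L^p(G/H,\mu)} \le \|f\|_{L^p(G)}$ on $\mathcal{C}_c(G)$. First I would recall that since $\mu$ is $G$-invariant, the rho-function can be taken to be $\rho \equiv 1$, so that $T_H = P_H$ and the Weil formula reads $\int_{G/H} T_H(f)\,d\mu = \int_G f\,dx$ for $f \in \mathcal{C}_c(G)$. The crucial estimate is then obtained by a convexity (Jensen) argument: for $f \in \mathcal{C}_c(G)$ and $x \in G$, since $H$ carries normalized Haar measure,
\begin{align*}
|T_H(f)(xH)|^p = \left| \int_H f(xh)\,dh \right|^p \le \int_H |f(xh)|^p\,dh = T_H(|f|^p)(xH).
\end{align*}
Integrating over $G/H$ against $\mu$ and applying the Weil formula to the nonnegative function $|f|^p$ gives
\begin{align*}
\|T_H(f)\|_{L^p(G/H,\mu)}^p = \int_{G/H} |T_H(f)(xH)|^p\,d\mu(xH) \le \int_{G/H} T_H(|f|^p)(xH)\,d\mu(xH) = \int_G |f(x)|^p\,dx = \|f\|_{L^p(G)}^p.
\end{align*}
Hence $T_H$ is a contraction from $(\mathcal{C}_c(G), \|\cdot\|_{L^p(G)})$ into $L^p(G/H,\mu)$.

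Next, since $\mathcal{C}_c(G)$ is $\|\cdot\|_{L^p(G)}$-dense in $L^p(G)$ and $L^p(G/H,\mu)$ is complete, the bounded linear map $T_H$ admits a unique continuous linear extension $T_H : L^p(G) \to L^p(G/H,\mu)$, still of norm $\le 1$; the standard approximation argument shows the extension is independent of the chosen approximating sequence, giving uniqueness. It remains to check surjectivity onto $L^p(G/H,\mu)$. For this I would note that $T_H(\mathcal{C}_c(G)) = \mathcal{C}_c(G/H)$ by Proposition 2.48 of \cite{FollH} (stated in the preliminaries), and $\mathcal{C}_c(G/H)$ is $\|\cdot\|_{L^p(G/H,\mu)}$-dense in $L^p(G/H,\mu)$. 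So given $\varphi \in L^p(G/H,\mu)$, pick $\varphi_n \in \mathcal{C}_c(G/H)$ with $\varphi_n \to \varphi$ in $L^p$; one can further arrange $\|\varphi_{n+1} - \varphi_n\|_{L^p(G/H,\mu)} \le 2^{-n}$. Using Proposition \ref{1}(2)--(3) (with $\rho \equiv 1$), each $\varphi_n$ has the preimage $[\varphi_n]_\pi = \varphi_n \circ \pi \in \mathcal{C}_c(G:H) \subset \mathcal{C}_c(G)$, and the identity $\|[\varphi_n]_\pi - [\varphi_m]_\pi\|_{L^p(G)} = \|\varphi_n - \varphi_m\|_{L^p(G/H,\mu)}$ (again Jensen is an equality here since $[\varphi_n - \varphi_m]_\pi$ is $H$-invariant, or directly from Weil's formula applied to $|\varphi_n \circ \pi - \varphi_m \circ \pi|^p = (|\varphi_n - \varphi_m|^p)\circ\pi$) makes $\{[\varphi_n]_\pi\}$ Cauchy in $L^p(G)$, hence convergent to some $f \in L^p(G)$; continuity of $T_H$ then yields $T_H(f) = \lim T_H([\varphi_n]_\pi) = \lim \varphi_n = \varphi$.

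The main obstacle is simply making the isometry-type estimate rigorous at the level of measurable, rather than continuous, functions — in particular justifying that $|f|^p$ and the pointwise Jensen inequality behave well under $T_H = P_H$, and that Weil's formula transfers from $\mathcal{C}_c(G)$ to nonnegative $L^1$-functions. This is handled by monotone convergence and the density of $\mathcal{C}_c(G)$ in $L^1(G)$, together with the fact (already recorded in the preliminaries) that the quotient integration formula extends from $\mathcal{C}_c$ to all nonnegative measurable functions. Everything else is routine functional analysis: boundedness plus density gives the unique extension, and the explicit $\mathcal{C}_c(G:H)$-preimages from Proposition \ref{1} upgrade ``dense range'' to genuine surjectivity.
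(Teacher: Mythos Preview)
Your argument is correct and follows the same line as the paper's: establish the contraction estimate on $\mathcal{C}_c(G)$ via Jensen's inequality for the normalized Haar measure on $H$ together with Weil's formula, then extend by density. The paper's proof is in fact terser than yours---it records only the chain of inequalities and then asserts the extension is ``onto'' without further comment---whereas you supply an explicit surjectivity argument by lifting an approximating sequence $\varphi_n\in\mathcal{C}_c(G/H)$ to $[\varphi_n]_\pi\in\mathcal{C}_c(G:H)$ and using the isometry $\|[\varphi_n]_\pi\|_{L^p(G)}=\|\varphi_n\|_{L^p(G/H,\mu)}$ to produce a preimage in $L^p(G)$; this is a genuine addition and is exactly the content the paper later isolates as Corollary~\ref{3.2}.
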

\begin{proof}
Let $p\ge 1$ and $f\in\mathcal{C}_c(G)$. Using the compactness of $H$ and also the Weil's  formula (\ref{Weil}) we get
\begin{align*}
\left\|T_H(f)\right\|_{L^p(G/H,\mu)}^p&=\int_{G/H}|T_H(f)(xH)|^pd\mu(xH)
\\&\le\int_{G/H}\left(\int_H{|f(xh)|}dh\right)^pd\mu(xH)
\\&\le\int_{G/H}\int_H{|f(xh)|}^pdhd\mu(xH)=\|f\|_{L^p(G)}^p.
\end{align*}
Now due to continuity of $T_H$ we can uniquely extend it into a bounded linear operator from $L^p(G)$ onto $L^p(G/H,\mu)$ which still satisfies
$\|T_H(f)\|_{L^p(G/H,\mu)}\le\|f\|_{L^p(G)}$ for all $f\in L^p(G)$.
\end{proof}


\begin{corollary}\label{3.2}
{\it Let $H$ be a compact subgroup of a locally compact group $G$, also let $\mu$ be a $G$-invariant measure on $G/H$ and $p\ge 1$. Then, for all $\varphi\in L^p(G/H,\mu)$ we have $\varphi_{\pi}\in L^p(G)$ and also
\begin{equation}
\|\varphi\|_{L^p(G/H,\mu)}=\|\varphi_{\pi}\|_{L^p(G)}.
\end{equation}}
\end{corollary}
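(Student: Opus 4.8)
The plan is to prove the stated identity first on the dense subspace $\mathcal{C}_c(G/H)$, where it is a direct computation, and then extend it to all of $L^p(G/H,\mu)$ by continuity; the only delicate point will be identifying the resulting $L^p(G)$-limit with the honest function $\varphi_{\pi}$. Since $\mu$ is $G$-invariant we may assume it arises from the rho-function $\rho=1$, so that $T_H=P_H$ and $\varphi_{\pi}=\varphi\circ\pi$. For $\varphi\in\mathcal{C}_c(G/H)$ we have $\varphi_{\pi}\in\mathcal{C}_c(G:H)\subseteq\mathcal{C}_c(G)$, hence trivially $\varphi_{\pi}\in L^p(G)$, and $|\varphi_{\pi}|^p=|\varphi|^p\circ\pi$ is again right $H$-invariant; since the Haar measure of $H$ is normalized,
\[
P_H(|\varphi_{\pi}|^p)(xH)=\int_H|\varphi(xhH)|^p\,dh=|\varphi(xH)|^p\int_H dh=|\varphi(xH)|^p .
\]
Feeding this into the Weil formula (\ref{Weil}) gives
\[
\|\varphi_{\pi}\|_{L^p(G)}^p=\int_G|\varphi_{\pi}(x)|^p\,dx=\int_{G/H}P_H(|\varphi_{\pi}|^p)(xH)\,d\mu(xH)=\int_{G/H}|\varphi(xH)|^p\,d\mu(xH)=\|\varphi\|_{L^p(G/H,\mu)}^p ,
\]
so $\varphi\mapsto\varphi_{\pi}$ is a linear isometry from $\bigl(\mathcal{C}_c(G/H),\|\cdot\|_{L^p(G/H,\mu)}\bigr)$ into $L^p(G)$; for $p=1$ this is exactly the computation already used in the proof of Theorem \ref{2}.

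To pass to $L^p(G/H,\mu)$, given $\varphi\in L^p(G/H,\mu)$ choose $\varphi_n\in\mathcal{C}_c(G/H)$ with $\varphi_n\to\varphi$ in $L^p(G/H,\mu)$ (density of $\mathcal{C}_c(G/H)$); by the isometry just established, $\bigl((\varphi_n)_{\pi}\bigr)_n$ is Cauchy in $L^p(G)$ and hence converges to some $F\in L^p(G)$ with $\|F\|_{L^p(G)}=\lim_n\|(\varphi_n)_{\pi}\|_{L^p(G)}=\lim_n\|\varphi_n\|_{L^p(G/H,\mu)}=\|\varphi\|_{L^p(G/H,\mu)}$. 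It remains to check that $F=\varphi_{\pi}$ almost everywhere. After passing to a subsequence we may assume $\varphi_n\to\varphi$ $\mu$-a.e.\ on $G/H$ and $(\varphi_n)_{\pi}\to F$ Haar-a.e.\ on $G$. If $N\subseteq G/H$ is the $\mu$-null set off which $\varphi_n\to\varphi$, then $P_H(\mathbf{1}_{\pi^{-1}(N)})=\mathbf{1}_N$, so $\pi^{-1}(N)$ is Haar-null in $G$; consequently $(\varphi_n)_{\pi}(x)=\varphi_n(\pi(x))\to\varphi(\pi(x))=\varphi_{\pi}(x)$ for Haar-a.e.\ $x$, and therefore $F=\varphi_{\pi}$ a.e. This yields $\varphi_{\pi}\in L^p(G)$ together with $\|\varphi_{\pi}\|_{L^p(G)}=\|\varphi\|_{L^p(G/H,\mu)}$.

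The step I expect to be the main obstacle is the identification $F=\varphi_{\pi}$: it rests on the fact that the canonical projection $\pi$ carries $\mu$-null sets to Haar-null sets, which is the ``nonnegative'' form of the Weil/disintegration formula $\int_{G/H}\bigl(\int_H f(xh)\,dh\bigr)\,d\mu(xH)=\int_G f(x)\,dx$ (valid for all nonnegative measurable $f$ on $G$), applied to an increasing approximation of the indicator of $\pi^{-1}(N)$. One also has to be a little careful that $\varphi_{\pi}=\varphi\circ\pi$ is a well-defined measurable function on $G$ when $\varphi\in L^p(G/H,\mu)$ is only specified $\mu$-a.e., which is the same null-set fact again. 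Once this measure-theoretic point is granted, the remainder is routine, and the argument also shows that $T_H$ restricted to the $\|\cdot\|_{L^p(G)}$-closure of $\mathcal{C}_c(G:H)$ is an isometric isomorphism onto $L^p(G/H,\mu)$ with inverse $\varphi\mapsto\varphi_{\pi}$.
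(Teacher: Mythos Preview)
Your proof is correct. The paper states this result as an unproved corollary of Proposition~\ref{3.1}, so there is no explicit argument to compare against; the Weil-formula computation you carry out on $\mathcal{C}_c(G/H)$ is exactly the implicit reasoning (and is the $p\ge 1$ version of the $p=1$ computation in the proof of Theorem~\ref{2}, as you note).

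Your extension step is sound but more elaborate than necessary. You already grant the nonnegative form of the Weil formula $\int_G f\,dx=\int_{G/H}P_H(f)\,d\mu$ for all nonnegative measurable $f$ in order to show $\pi^{-1}(N)$ is Haar-null; applying that same formula directly to $f=|\varphi_{\pi}|^p=(|\varphi|^p)\circ\pi$ for an arbitrary $\varphi\in L^p(G/H,\mu)$ yields
\[
\|\varphi_{\pi}\|_{L^p(G)}^p=\int_G|\varphi_{\pi}|^p\,dx=\int_{G/H}P_H\bigl(|\varphi_{\pi}|^p\bigr)\,d\mu=\int_{G/H}|\varphi|^p\,d\mu=\|\varphi\|_{L^p(G/H,\mu)}^p
\]
in one stroke, with no need for the Cauchy-sequence argument or the identification $F=\varphi_{\pi}$. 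The measurability and null-set issues you flag as the ``main obstacle'' are genuine prerequisites (they ensure $\varphi_{\pi}$ is a well-defined element of the measurable functions on $G$), but once you accept the nonnegative Weil formula, they are already handled and no density argument is required. This direct route is presumably why the paper treats the statement as immediate.
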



\begin{corollary}
{\it Let $H$ be a compact subgroup of a locally compact group $G$, also let $\mu$ be a $G$-invariant measure on $G/H$ and $p\ge 1$. Then, for all $\varphi\in L^p(G/H,\mu)$ we have
\begin{equation}
\lim_{x\to e}\|L_x\varphi-\varphi\|_{L^p(G/H,\mu)}=0.
\end{equation}}
\end{corollary}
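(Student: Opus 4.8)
The plan is to reduce the statement to the classical strong continuity of left translations on $L^p(G)$, transporting everything through the isometry $\psi\mapsto\psi_\pi$ furnished by Corollary \ref{3.2}. Since $\mu$ is $G$-invariant we may take the rho-function to be $\rho=1$, so that $\psi_\pi=\psi\circ\pi$ for every measurable $\psi$ on $G/H$; in particular the map $\psi\mapsto\psi_\pi$ is linear, and for $x\in G$ one has the elementary identity $(L_x\psi)_\pi=L_x(\psi_\pi)$, because for a.e.\ $y\in G$
\begin{align*}
(L_x\psi)_\pi(y)=(L_x\psi)(yH)=\psi(x^{-1}yH)=\psi_\pi(x^{-1}y)=L_x(\psi_\pi)(y).
\end{align*}
First I would record this identity together with the linearity just noted.

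Then, for a fixed $\varphi\in L^p(G/H,\mu)$, Corollary \ref{3.2} gives $\varphi_\pi\in L^p(G)$ and, using translation invariance of the Haar measure of $G$ and the identity above,
\begin{align*}
\|L_x\varphi\|_{L^p(G/H,\mu)}=\|(L_x\varphi)_\pi\|_{L^p(G)}=\|L_x(\varphi_\pi)\|_{L^p(G)}=\|\varphi_\pi\|_{L^p(G)}=\|\varphi\|_{L^p(G/H,\mu)}.
\end{align*}
In particular $L_x\varphi-\varphi\in L^p(G/H,\mu)$, so applying Corollary \ref{3.2} to $\psi=L_x\varphi-\varphi$, again together with linearity of $\psi\mapsto\psi_\pi$ and the identity $(L_x\varphi)_\pi=L_x(\varphi_\pi)$, yields
\begin{align*}
\|L_x\varphi-\varphi\|_{L^p(G/H,\mu)}=\|(L_x\varphi)_\pi-\varphi_\pi\|_{L^p(G)}=\|L_x(\varphi_\pi)-\varphi_\pi\|_{L^p(G)}.
\end{align*}

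It then suffices to invoke the standard fact that, for $1\le p<\infty$, the map $x\mapsto L_xg$ is continuous from $G$ into $L^p(G)$ (see \cite{FollH}), applied to $g=\varphi_\pi$, to conclude $\lim_{x\to e}\|L_x\varphi-\varphi\|_{L^p(G/H,\mu)}=0$. There is no genuine obstacle here; the only steps requiring a moment's attention are the verification of $(L_x\psi)_\pi=L_x(\psi_\pi)$ and of the norm identity $\|L_x\varphi\|_{L^p(G/H,\mu)}=\|\varphi\|_{L^p(G/H,\mu)}$ (both immediate once $\rho=1$), so that Corollary \ref{3.2} may legitimately be applied to the difference $L_x\varphi-\varphi$. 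The substantive content of the corollary is thus entirely inherited, via the isometric embedding $\psi\mapsto\psi_\pi$, from the translation-continuity of $L^p(G)$.
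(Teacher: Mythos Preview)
Your argument is correct, and it is exactly the approach implied by the paper's setup: the corollary is stated in the paper without proof, as an immediate consequence of Corollary~\ref{3.2}, and you have carried out precisely that deduction---transporting $\varphi$ to $\varphi_\pi\in L^p(G)$ via the isometry of Corollary~\ref{3.2}, using the commutation $(L_x\varphi)_\pi=L_x(\varphi_\pi)$ (valid since $\rho=1$), and invoking the classical strong continuity of translations on $L^p(G)$. There is nothing to add.
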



Now let $\varphi\in L^1(G/H,\mu)$ and $\psi\in L^p(G/H,\mu)$ with $p\ge 1$. Define
\begin{equation}
\varphi\ast\psi:=T_H(\varphi_{\pi}\ast\psi_{\pi}).
\end{equation}
Then, $L^p(G/H,\mu)$ becomes a left Banach $L^1(G/H,\mu)$-module via the module action
\begin{equation}\label{DD}
*:L^1(G/H,\mu)\times L^p(G/H,\mu)\to L^p(G/H,\mu),
\end{equation}
that is defined by $(\varphi,\psi)\mapsto\varphi\ast\psi$. Using Proposition \ref{3.1}, Corollary \ref{3.2} and also Proposition 2.39 of \cite{FollH}, for all $\varphi\in L^1(G/H,\mu)$ and $\psi\in L^p(G/H,\mu)$ we have
\begin{align*}
\|\varphi\ast\psi\|_{L^p(G/H,\mu)}&=\|T_H(\varphi_{\pi}\ast\psi_{\pi})\|_{L^p(G/H,\mu)}
\\&\le \|\varphi_{\pi}\ast\psi_{\pi}\|_{L^p(G)}
\\&\le \|\varphi_{\pi}\|_{L^1(G)}\|\psi_{\pi}\|_{L^p(G)}=\|\varphi\|_{L^1(G/H,\mu)}\|\psi\|_{L^p(G/H,\mu)}.
\end{align*}


Thus, we prove the following theorem.


\begin{theorem}
Let $H$ be a compact subgroup of a locally compact group $G$ and also let $\mu$ be a $G$-invariant measure on $G/H$.
Then, $L^p(G/H,\mu)$ via the module action defined in (\ref{DD}) is a Banach $L^1(G/H,\mu)$-module for all $p\ge 1$.
\end{theorem}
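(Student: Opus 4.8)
The plan is to verify the three defining properties of a left Banach module over the Banach algebra $L^1(G/H,\mu)$ (equipped with the convolution product of Theorem \ref{2}): that the action $(\varphi,\psi)\mapsto\varphi\ast\psi:=T_H(\varphi_{\pi}\ast\psi_{\pi})$ is well-defined and bilinear, that it obeys the contractive estimate $\|\varphi\ast\psi\|_{L^p(G/H,\mu)}\le\|\varphi\|_{L^1(G/H,\mu)}\|\psi\|_{L^p(G/H,\mu)}$, and that it is associative in the sense $(\varphi\ast\psi)\ast\xi=\varphi\ast(\psi\ast\xi)$ for $\varphi,\psi\in L^1(G/H,\mu)$ and $\xi\in L^p(G/H,\mu)$. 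Well-definedness and bilinearity are immediate: by Corollary \ref{3.2} we have $\varphi_{\pi}\in L^1(G)$ and $\psi_{\pi}\in L^p(G)$, so $\varphi_{\pi}\ast\psi_{\pi}\in L^p(G)$ by Young's inequality (Proposition 2.39 of \cite{FollH}) and $T_H(\varphi_{\pi}\ast\psi_{\pi})\in L^p(G/H,\mu)$ by Proposition \ref{3.1}, while linearity of $\varphi\mapsto\varphi_{\pi}$ and of $T_H$, together with bilinearity of convolution on $G$, yield bilinearity of the action. The norm estimate is exactly the chain of inequalities displayed just before the theorem: bound $\|\varphi_{\pi}\ast\psi_{\pi}\|_{L^p(G)}$ by $\|\varphi_{\pi}\|_{L^1(G)}\|\psi_{\pi}\|_{L^p(G)}$ via Young's inequality, then use $\|T_H\|\le1$ from Proposition \ref{3.1} and the isometries $\|\varphi_{\pi}\|_{L^1(G)}=\|\varphi\|_{L^1(G/H,\mu)}$, $\|\psi_{\pi}\|_{L^p(G)}=\|\psi\|_{L^p(G/H,\mu)}$ from Corollary \ref{3.2}.

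The property needing real work is associativity, and for it I would first promote the identity $[\varphi\ast\psi]_{\pi}=\varphi_{\pi}\ast\psi_{\pi}$ of (\ref{1.1.1}) from $\mathcal{C}_c(G/H)$ to the mixed setting $\varphi\in L^1(G/H,\mu)$, $\psi\in L^p(G/H,\mu)$. The key observation is that $\psi_{\pi}$ is right $H$-invariant, hence $\varphi_{\pi}\ast\psi_{\pi}$ is again right $H$-invariant because $R_h(\varphi_{\pi}\ast\psi_{\pi})=\varphi_{\pi}\ast R_h\psi_{\pi}$ (here $\Delta_G|_H\equiv1$, so right translation by $h\in H$ is an $L^p$-isometry), so $\varphi_{\pi}\ast\psi_{\pi}$ represents an element of the right-$H$-invariant subspace of $L^p(G)$. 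On that subspace the a.e.\ identity $T_H(u)_{\pi}=u$ holds: this is the computation of Proposition \ref{1}(2) with $\rho=1$, valid almost everywhere by a Fubini argument on $G\times H$ applied to $(x,h)\mapsto u(xh)-u(x)$. Consequently $[\varphi\ast\psi]_{\pi}=[T_H(\varphi_{\pi}\ast\psi_{\pi})]_{\pi}=\varphi_{\pi}\ast\psi_{\pi}$, and the same identity holds for the $L^1\times L^1$ case already used in Theorem \ref{2}.

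With this identity in hand, associativity reduces to associativity of convolution on $G$ (valid for the triple $L^1(G)\ast L^1(G)\ast L^p(G)$ by Fubini): for $\varphi,\psi\in L^1(G/H,\mu)$ and $\xi\in L^p(G/H,\mu)$,
\begin{align*}
(\varphi\ast\psi)\ast\xi&=T_H\big([\varphi\ast\psi]_{\pi}\ast\xi_{\pi}\big)=T_H\big((\varphi_{\pi}\ast\psi_{\pi})\ast\xi_{\pi}\big)\\
&=T_H\big(\varphi_{\pi}\ast(\psi_{\pi}\ast\xi_{\pi})\big)=T_H\big(\varphi_{\pi}\ast[\psi\ast\xi]_{\pi}\big)=\varphi\ast(\psi\ast\xi).
\end{align*}
I expect this extension of $[\,\cdot\ast\cdot\,]_{\pi}=(\cdot)_{\pi}\ast(\cdot)_{\pi}$ to be the main obstacle, since it forces one to handle a.e.-defined functions carefully — genuine right $H$-invariance of $\varphi_{\pi}\ast\psi_{\pi}$ and the a.e.\ validity of $T_H(u)_{\pi}=u$ on the right-$H$-invariant subspace — which is most cleanly carried out by density of $\mathcal{C}_c(G:H)$ in that subspace (for $p<\infty$) together with the continuity bounds of Proposition \ref{3.1} and Corollary \ref{3.2}; once this is settled, the remaining module axioms are routine.
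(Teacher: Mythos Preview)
Your proposal is correct and follows the paper's approach: the paper's entire proof consists of the displayed chain of inequalities immediately preceding the theorem (using Proposition~\ref{3.1}, Corollary~\ref{3.2}, and Young's inequality), after which it simply declares ``Thus, we prove the following theorem.'' You reproduce that norm estimate verbatim and then go further, supplying the associativity argument via the extension of (\ref{1.1.1}) to the $L^1\times L^p$ setting; the paper leaves this module axiom entirely implicit, so your treatment is strictly more complete than the original while remaining along the same lines.
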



\section{\bf{Approaches for the involution of $L^1(G/H,\mu)$}}


In this section we study approaches for the concept of involution on the $L^1$-function space of the homogeneous space $G/H$. Recall that we still assume that $H$ is a compact subgroup of a locally compact group $G$ with a normalized Haar measure.

In the first approach we show that when $\mu$ is a $G$-invariant measure on $G/H$, there is a closed sub-algebra $A^1(G/H,\mu)$ of
$L^1(G/H,\mu)$ and also a well-defined involution on this closed sub-algebra in which with respect to this involution and also the induced convolution from $L^1(G/H,\mu)$, the closed sub-algebra $A^1(G/H,\mu)$ will be a Banach $*$-algebra.

\subsection{Approach I}

For all $p\ge 1$ and also an arbitrary $G$-invariant measure $\mu$ on $G/H$ let
\begin{equation}
A^p(G/H,\mu):=\{\varphi\in L^p(G/H,\mu): L_h\varphi=\varphi\ \ \forall h\in H\}.
\end{equation}
It can be checked that for all $p\ge 1$, $A^p(G/H,\mu)$ is a $\|.\|_{L^p(G/H,\mu)}$-closed subspace of $L^p(G/H,\mu)$. If we also put
\begin{equation}
A^p(G:H):=\{f\in L^p(G): L_hf=f\ \ \forall h\in H\},
\end{equation}
then $A^p(G:H)$ is a $\|.\|_{L^p(G)}$-closed subspace of $L^p(G)$ and also $T_H$ maps $A^p(G:H)$ onto $A^p(G/H,\mu)$.

If $p=1$, then $A^1(G:H)$ is a $\|.\|_{L^1(G)}$-closed right ideal of $L^1(G)$
and also using Corollary \ref{LT0}, $A^1(G/H,\mu)$ is a $\|.\|_{L^1(G/H,\mu)}$-closed right ideal and so that $A^1(G/H,\mu)$ is a
$\|.\|_{L^1(G/H,\mu)}$-closed sub-algebra of $L^1(G/H,\mu)$.
Therefore, $A^1(G/H,\mu)$ is a Banach algebra.  Now we define the involution map $^*:{A^1(G/H,\mu)}\to A^1(G/H,\mu)$ via $\varphi\mapsto\varphi^*$ by
\begin{equation}\label{INV1}
\varphi^*:=T_H([\varphi_{\pi}]^*).
\end{equation}
It is clear that $^*:{A^1(G/H,\mu)}\to A^1(G/H,\mu)$ is a well-defined conjugate linear map. For all $\varphi\in A^1(G/H,\mu)$ we have
$\varphi_{\pi}\in A^1(G:H)\cap L^1(G:H)$ which implies that $[\varphi_{\pi}]^*$ belongs $A^1(G:H)\cap L^1(G:H)$.
Hence, for all $\varphi\in A^1(G/H,\mu)$ we achieve
\begin{equation}\label{qi}
[\varphi^*]_{\pi}=[\varphi_{\pi}]^*.
\end{equation}


In the following theorem, it is shown that the Banach algebra  $A^1(G/H,\mu)$ with respect to the involution defined in (\ref{INV1}) is a Banach $*$-algebra.


\begin{theorem}
Let $H$ be a compact subgroup of a locally compact group $G$ and also let $\mu$ be a $G$-invariant measure on $G/H$. The Banach algebra $A^1(G/H,\mu)$ with respect to the involution defined in (\ref{INV1}) is a Banach $*$-algebra.
\end{theorem}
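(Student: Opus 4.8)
The plan is to verify the three defining properties of a Banach $*$-algebra for $A^1(G/H,\mu)$ equipped with the involution $\varphi\mapsto\varphi^*:=T_H([\varphi_{\pi}]^*)$, by transporting each identity to the group level via the relation $[\varphi^*]_{\pi}=[\varphi_{\pi}]^*$ in \eqref{qi}, using that $T_H|_{\mathcal{C}_c(G:H)}$ (and more generally on $L^1(G:H)$) is injective. The three things to check are: (i) $\varphi^{**}=\varphi$; (ii) $(\varphi\ast\psi)^*=\psi^*\ast\varphi^*$; and (iii) $\|\varphi^*\|_{L^1(G/H,\mu)}=\|\varphi\|_{L^1(G/H,\mu)}$. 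Throughout I would freely use that for $\varphi\in A^1(G/H,\mu)$ one has $\varphi_{\pi}\in A^1(G:H)\cap L^1(G:H)$, and that this intersection is invariant under the standard involution of $L^1(G)$, which is precisely what makes the definition legitimate.

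First I would prove (iii), the norm identity, since it is the cleanest: for $\varphi\in A^1(G/H,\mu)$, Corollary \ref{3.2} (with $p=1$) gives $\|\varphi^*\|_{L^1(G/H,\mu)}=\|[\varphi^*]_{\pi}\|_{L^1(G)}=\|[\varphi_{\pi}]^*\|_{L^1(G)}=\|\varphi_{\pi}\|_{L^1(G)}=\|\varphi\|_{L^1(G/H,\mu)}$, where the middle equality is \eqref{qi} and the next uses that the group involution is isometric on $L^1(G)$. Next, for the conjugate-linear involutive property (i): applying $\pi$-lifting twice, $[\varphi^{**}]_{\pi}=[\varphi^*_{\pi}]^*=[[\varphi_{\pi}]^*]^*=\varphi_{\pi}$, using \eqref{qi} once for $\varphi^*$ and once for $\varphi$, together with $f^{**}=f$ in $L^1(G)$; since $\varphi^{**}_{\pi}=\varphi_{\pi}$ and both lie in $L^1(G:H)$ on which $T_H$ is injective, we conclude $\varphi^{**}=\varphi$.

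The main work is (ii), the anti-multiplicativity. Here I would use that on $A^1(G/H,\mu)$ the convolution from Theorem \ref{2} satisfies $[\varphi\ast\psi]_{\pi}=\varphi_{\pi}\ast\psi_{\pi}$; this was established for $\mathcal{C}_c(G/H)$ in \eqref{1.1.1} and extends by continuity (the needed density of $\mathcal{C}_c(G/H)\cap A^1(G/H,\mu)$, or rather the fact that both sides of the desired identity are continuous in $\varphi,\psi$, should be checked). Then $[(\varphi\ast\psi)^*]_{\pi}=[[\varphi\ast\psi]_{\pi}]^*=[\varphi_{\pi}\ast\psi_{\pi}]^*=[\psi_{\pi}]^*\ast[\varphi_{\pi}]^*=[\psi^*]_{\pi}\ast[\varphi^*]_{\pi}=[\psi^*\ast\varphi^*]_{\pi}$, invoking \eqref{qi} three times and the anti-homomorphism property of the group involution on $L^1(G)$; injectivity of $T_H$ on $L^1(G:H)$ finishes it. The subtle point — and the step I expect to be the main obstacle — is making sure all the $\pi$-lifting identities \eqref{1.1.1} and \eqref{qi}, originally proven for $\mathcal{C}_c$-level functions, genuinely persist on all of $A^1(G/H,\mu)$; this requires knowing $\mathcal{C}_c(G/H)\cap A^1(G/H,\mu)$ is dense in $A^1(G/H,\mu)$ (e.g.\ by averaging $\mathcal{C}_c$ approximants over $H$) and that $\varphi\mapsto\varphi_{\pi}$ and $[\,\cdot\,]^*$ are continuous in the relevant norms, which Corollary \ref{3.2} and isometry of the group involution supply.
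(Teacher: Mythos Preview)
Your proposal is correct and follows essentially the same approach as the paper: both verify $\varphi^{**}=\varphi$, $(\varphi\ast\psi)^*=\psi^*\ast\varphi^*$, and the isometry by lifting via \eqref{qi} and \eqref{1.1.1} to $L^1(G)$ and invoking the corresponding properties of the group involution, then concluding from injectivity of $T_H$ on $L^1(G:H)$. Your extra care about extending \eqref{1.1.1} from $\mathcal{C}_c(G/H)$ to $A^1(G/H,\mu)$ by density and continuity is a point the paper simply takes for granted; note, however, that \eqref{qi} is already stated in the paper for all $\varphi\in A^1(G/H,\mu)$, so no density argument is needed there.
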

\begin{proof}
Let $\varphi\in A^1(G/H,\mu)$ be arbitrary. Using (\ref{qi}), we have
\begin{align*}
[\varphi^{**}]_{\pi}&=[(\varphi^*)^*]_{\pi}
\\&=[(\varphi^*)_{\pi}]^*
\\&=[(\varphi_{\pi})^*]^*=\varphi_{\pi},
\end{align*}
which implies $\varphi^{**}=\varphi$. According to the anti-homomorphism property of the standard involution on the Banach algebra $L^1(G)$, (\ref{1.1.1}) and (\ref{qi}) for all $\varphi,\psi\in A^1(G/H,\mu)$ we achieve
\begin{align*}
[(\varphi\ast\psi)^*]_{\pi}&=[(\varphi\ast\psi)_{\pi}]^*
\\&=[\varphi_{\pi}\ast\psi_{\pi}]^*
\\&=[\psi_{\pi}]^*\ast[\varphi_{\pi}]^*
\\&=[\psi^*]_{\pi}\ast[\varphi^*]_{\pi}=[\psi^*\ast\varphi^*]_{\pi},
\end{align*}
which guarantees that $(\varphi\ast\psi)^*=\psi^*\ast\varphi^*$. The conjugate linear map $^*:A^1(G/H,\mu)\to A^1(G/H,\mu)$ is an isometric, because we have
\begin{align*}
\|\varphi^*\|_{L^1(G/H,\mu)}&=\left\|[\varphi^*]_{\pi}\right\|_{L^1(G)}
\\&=\left\|[\varphi_{\pi}]^*\right\|_{L^1(G)}
\\&=\|\varphi_{\pi}\|_{L^1(G)}=\|\varphi\|_{L^1(G/H,\mu)}.
\end{align*}
\end{proof}


\begin{corollary}
{\it Let $H$ be a compact subgroup of a locally compact group $G$ and also let $\mu$ be a $G$-invariant measure on $G/H$.
Then, $T_H:A^1(G:H)\cap L^1(G:H)\to A^1(G/H,\mu)$ is a continuous $*$-homomorphism.}
\end{corollary}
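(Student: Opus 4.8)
Write $\mathcal{A}:=A^1(G:H)\cap L^1(G:H)$. The plan is to recombine results already in the paper rather than prove anything new. The first task is to check that $\mathcal{A}$ is a Banach $*$-subalgebra of $L^1(G)$ on which $T_H$ is bounded. It is $\|\cdot\|_{L^1(G)}$-closed, being an intersection of closed subspaces, and closed under convolution, since $A^1(G:H)$ is a right ideal and $L^1(G:H)$ a subalgebra of $L^1(G)$, so $f,g\in\mathcal{A}$ forces $f\ast g\in A^1(G:H)\cap L^1(G:H)$. For invariance under the involution of $L^1(G)$ one uses $\Delta_G|_H=1$: a direct computation gives $R_h(f^*)=(L_hf)^*$ and $L_h(f^*)=(R_hf)^*$ for $h\in H$, hence $f\in\mathcal{A}$ implies $f^*\in\mathcal{A}$ — this is exactly the observation recorded just before (\ref{qi}). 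Finally $\mathcal{A}\subseteq L^1(G:H)$, so the isometric isomorphism $T_H\colon L^1(G:H)\to L^1(G/H,\mu)$ obtained as a corollary of Theorem~\ref{2} shows that $T_H|_{\mathcal{A}}$ is norm-preserving (hence continuous) and, crucially, that $[T_Hf]_{\pi}=f$ for every $f\in\mathcal{A}$; together with Proposition~\ref{1} this is the only identity that really does any work.

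Next I would observe that $T_H$ maps $\mathcal{A}$ onto $A^1(G/H,\mu)$. Since $\mu$ is $G$-invariant, $T_H=P_H$ commutes with left translations, so for $f\in A^1(G:H)$ and $h\in H$ one gets $L_hT_Hf=T_HL_hf=T_Hf$, i.e. $T_Hf\in A^1(G/H,\mu)$; conversely, for $\varphi\in A^1(G/H,\mu)$ we have $\varphi_{\pi}\in A^1(G:H)\cap L^1(G:H)=\mathcal{A}$ with $T_H(\varphi_{\pi})=\varphi$ by Proposition~\ref{1}. The multiplicativity and $*$-compatibility then drop out of $[T_Hf]_{\pi}=f$ combined with (\ref{1.1}), (\ref{1.1.1}) and (\ref{INV1}): for $f,g\in\mathcal{A}$, setting $\varphi=T_Hf$, $\psi=T_Hg$ gives $\varphi_{\pi}=f$, $\psi_{\pi}=g$, and the definition (\ref{1.1}) of the convolution on $L^1(G/H,\mu)$ yields $T_Hf\ast T_Hg=\varphi\ast\psi=T_H(\varphi_{\pi}\ast\psi_{\pi})=T_H(f\ast g)$; and for $f\in\mathcal{A}$ with $\varphi=T_Hf$ and $\varphi_{\pi}=f$, the definition (\ref{INV1}) of the involution on $A^1(G/H,\mu)$ yields $(T_Hf)^*=\varphi^*=T_H([\varphi_{\pi}]^*)=T_H(f^*)$. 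Together with the boundedness from the first step this is exactly the assertion.

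I do not anticipate a genuine obstacle; the corollary is bookkeeping on top of Theorem~\ref{2}, Proposition~\ref{1} and the involution built in (\ref{INV1}). The two spots deserving a moment's care are (i) stability of $\mathcal{A}$ under the involution of $L^1(G)$, where $\Delta_G|_H=1$ enters and which was already verified before (\ref{qi}), and (ii) the identity $[T_Hf]_{\pi}=f$ valid on $L^1(G:H)$, which is Proposition~\ref{1} packaged with the isometric-isomorphism corollary of Theorem~\ref{2}. With those granted, the three homomorphism and involution identities are immediate.
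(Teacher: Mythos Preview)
Your proposal is correct and aligns with the paper's intended argument: the corollary is stated without proof there, being an immediate consequence of Proposition~\ref{1}, the isometric-isomorphism corollary of Theorem~\ref{2}, and the observation preceding (\ref{qi}) that $A^1(G:H)\cap L^1(G:H)$ is stable under the involution of $L^1(G)$. You have simply unpacked these ingredients in the right order, including the extra (but welcome) verification of surjectivity.
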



\begin{remark}
If $H$ is a compact normal subgroup of a locally compact group $G$, automatically for all $p\ge 1$ we have $A^p(G/H,\mu)=L^p(G/H,\mu)$ and also for $p=1$ the involution defined in (\ref{qi}) coincides with the standard involution on $L^1(G/H,\mu)$ and therefore the Banach $*$-algebra $A^1(G/H,\mu)$ coincides with $L^1(G/H,\mu)$.
\end{remark}


\subsection{Approach II}

In this approach we note that for each closed subgroup $H$ of a locally compact group $G$, left coset space $G/_lH=\{xH:x\in G\}$ and
right coset space $G/_rH=\{Hx:x\in G\}$ are topologically the same via the homeomorphism $Q:G/_lH\to G/_rH$ given by $Q(xH)=Hx^{-1}$. The technical point is that when $H$ is a normal closed subgroup of $G$, these space are precisely the same, because each left coset $xH$ is precisely the right coset $Hx$. Thus, due to this approach we should fix differences in notations for left coset space and also right coset space.

From now on by $\mu_l$ and $\mu_r$ we mean a measure on $G/_lH$ respectively $G/_rH$ also $\pi_l:G\to G/_lH$ and $\pi_r:G\to G/_rH$ denote the associated canonical maps from $G$ onto coset spaces $G/_lH$ and $G/_rH$ respectively. The same terminologies as fixed for the left coset space $G/_lH$, similarly can be used for $G/_rH$ such as $L^1_r(G:H)$. It is worthwhile to remember that
\begin{equation}
L^1_l(G:H)=\{f\in L^1(G):R_hf=f\ \forall\ h\in H\}
\end{equation}
\begin{equation}
L^1_r(G:H)=\{f\in L^1(G):L_hf=f\ \forall\ h\in H\}.
\end{equation}
A $G$-invariant measure $\mu_r$ on $G/_rH$ stands for a Radon measure $\mu_r$ on $G/_rH$ satisfying $\mu_r(Ex)=\mu_r(E)$ for all $x\in G$ and Borel subset $E$ of $G/_rH$. If $\mu_r$ is a $G$-invariant measure on $G/_rH$, the linear map
\begin{equation}
T_H^r:L^1(G)\to L^1(G/_rH,\mu_r),
\end{equation}
for all $f\in L^1(G)$ is given by
\begin{equation}
T_H^r(f)(Hx)=\int_Hf(hx)dh.
\end{equation}
We should also recall that the convolution given in (\ref{1.1}) can be similarly defined in the case of right coset space for $L^1$-function space $L^1(G/_rH,\mu_r)$, where $\mu_r$ is a relatively invariant measure on $G/_rH$. We use the notation $\ast_r$ for the convolution satisfying \begin{equation}
\varphi\ast_r\psi=T_H^r(\varphi_{\pi_r}\ast\psi_{\pi_r}),
\end{equation}
for all $\varphi,\psi\in L^1(G/_rH,\mu_r)$.


Let $\mu_l$ and $\mu_r$ be $G$-invariant measures on $G/_lH$ respectively $G/_rH$. Now let $\varphi\in L^1(G/_lH,\mu_l)$ with $\varphi=T_H^l(f)$
for some $f\in L^1(G)$. We put
\begin{equation}\label{inv}
\varphi^{*_{l,r}}:=T_H^r(f^*),
\end{equation}
where $f^*$ is the standard involution on $L^1(G)$. The conjugate linear (involution-type) map
\begin{equation}\label{inv1}
^{\ast_{l,r}}:L^1(G/_lH,\mu_l)\to L^1(G/_rH,\mu_r),
\end{equation}
given by
$\varphi\mapsto \varphi^{\ast_{l,r}}$ is well defined. Because using compactness of $H$ if $T_H^l(f)=0$ for some $f\in L^1(G)$, then for a.e $Hx\in G/_r H$ we have
\begin{align*}
T_H^r(f^*)(Hx)&=\int_Hf^*(hx)dh
\\&=\int_H\Delta_G(x^{-1}h^{-1})\overline{f(x^{-1}h^{-1})}dh
\\&=\Delta_G(x^{-1})\int_H\overline{f(x^{-1}h)}dh
\\&=\Delta_G(x^{-1})\overline{T_H^l(f)(x^{-1}H)}=0.
\end{align*}

The involution-type map $^{\ast_{l,r}}:L^1(G/_lH,\mu_l)\to L^1(G/_rH,\mu_r)$ for all $\varphi\in L^1(G/_lH,\mu_l)$ satisfies
\begin{equation}\label{qlr1}
[\varphi_{\pi_l}]^*=[\varphi^{\ast_{l,r}}]_{\pi_r}.
\end{equation}
Because, due to (\ref{inv}) we have
$$T_H^r([\varphi_{\pi_l}]^*)=\varphi^{*_{l,r}}=T_H^r([\varphi^{\ast_{l,r}}]_{\pi_r}),$$
and since $[\varphi_{\pi_l}]^*,[\varphi^{\ast_{l,r}}]_{\pi_r}$ belong to $L^1_r(G:H)$ and $T_H^r|_{L^1_r(G:H)}$ is injective, (\ref{qlr1}) holds.

The involution-type map
\begin{equation}\label{inv2}
^{\ast_{r,l}}:L^1(G/_rH,\mu_r)\to L^1(G/_lH,\mu_l),
\end{equation}
can be defined similarly which satisfies $[\phi_{\pi_r}]^*=[\phi^{\ast_{r,l}}]_{\pi_l}$ for all $\phi\in L^1(G/_rH,\mu_r)$.
Using (\ref{qlr1}) for all $\varphi\in L^1(G/_lH,\mu_l)$ and $\phi\in L^1(G/_rH,\mu_r)$ we have $\left(\varphi^{*_{l,r}}\right)^{*_{r,l}}=\varphi$ and also $\left(\phi^{*_{r,l}}\right)^{*_{l,r}}=\phi$. Because
\begin{align*}
\left[\left(\varphi^{*_{l,r}}\right)^{*_{r,l}}\right]_{\pi_l}
&=\left[\left(\varphi^{\ast_{l,r}}\right)_{\pi_r}\right]^*
\\&=\left[\left[\varphi_{\pi_l}\right]^*\right]^*=\varphi_{\pi_l}.
\end{align*}


In the sequel theorem we show that the linear map $^{\ast_{l,r}}:L^1(G/_lH,\mu_l)\to L^1(G/_rH,\mu_r)$ is an anti-homomorphism isometric.


\begin{theorem}
Let $H$ be a compact subgroup of a locally compact group $G$ also let $\mu_l$ and $\mu_r$ be arbitrary $G$-invariant measures on $G/_lH$ and  $G/_rH$ respectively.
The involution-type map
$$^{*_{l,r}}:L^1(G/_lH,\mu_l)\to L^1(G/_rH,\mu_r),$$
is an anti-homomorphism isometric.
\end{theorem}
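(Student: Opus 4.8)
The plan is to verify the two claimed properties — the isometry and the anti-homomorphism property — by transporting everything up to $G$ via the section maps $\varphi\mapsto\varphi_{\pi_l}$ and $\phi\mapsto\phi_{\pi_r}$, where we already know (Corollary \ref{3.2} and its right-coset analogue) that these maps are isometric embeddings of $L^1(G/_lH,\mu_l)$ and $L^1(G/_rH,\mu_r)$ into $L^1(G)$ with image $L^1_l(G:H)$ and $L^1_r(G:H)$ respectively, and where the key bookkeeping identity $[\varphi^{*_{l,r}}]_{\pi_r}=[\varphi_{\pi_l}]^*$ from (\ref{qlr1}) is available. The isometry is then immediate: for $\varphi\in L^1(G/_lH,\mu_l)$ we compute
\begin{align*}
\|\varphi^{*_{l,r}}\|_{L^1(G/_rH,\mu_r)}&=\|[\varphi^{*_{l,r}}]_{\pi_r}\|_{L^1(G)}
\\&=\|[\varphi_{\pi_l}]^*\|_{L^1(G)}
\\&=\|\varphi_{\pi_l}\|_{L^1(G)}=\|\varphi\|_{L^1(G/_lH,\mu_l)},
\end{align*}
using (\ref{qlr1}) in the second line and the fact that the standard involution on $L^1(G)$ is isometric in the third.

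For the anti-homomorphism property I must show $(\varphi\ast_l\psi)^{*_{l,r}}=\psi^{*_{l,r}}\ast_r\varphi^{*_{l,r}}$ for all $\varphi,\psi\in L^1(G/_lH,\mu_l)$. Since $T_H^r|_{L^1_r(G:H)}$ is injective (Proposition \ref{1}, right-coset version) and both sides, after applying the subscript-$\pi_r$ map, land in $L^1_r(G:H)$, it suffices to prove the identity after $[\cdot]_{\pi_r}$. Applying (\ref{qlr1}) to the left side gives $[(\varphi\ast_l\psi)^{*_{l,r}}]_{\pi_r}=[(\varphi\ast_l\psi)_{\pi_l}]^*$, and by (\ref{1.1.1}) (the left-coset convolution identity $[\varphi\ast_l\psi]_{\pi_l}=\varphi_{\pi_l}\ast\psi_{\pi_l}$) this equals $[\varphi_{\pi_l}\ast\psi_{\pi_l}]^*$. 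Now invoke the anti-homomorphism property of the standard involution on $L^1(G)$ to get $[\psi_{\pi_l}]^*\ast[\varphi_{\pi_l}]^*$. Applying (\ref{qlr1}) again converts this into $[\psi^{*_{l,r}}]_{\pi_r}\ast[\varphi^{*_{l,r}}]_{\pi_r}$, and finally the right-coset convolution identity $\ast_r$ (the analogue of (\ref{1.1.1}) for $G/_rH$) rewrites this as $[\psi^{*_{l,r}}\ast_r\varphi^{*_{l,r}}]_{\pi_r}$, completing the chain.

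The main obstacle, such as it is, is not analytic but notational: one must be careful that the right-coset versions of Proposition \ref{1} (injectivity of $T_H^r$ on $L^1_r(G:H)$), of the convolution identity (\ref{1.1.1}), and of Corollary \ref{3.2} are all legitimately available, and in particular that $[\varphi\ast_l\psi]_{\pi_l}$ and $[\psi^{*_{l,r}}\ast_r\varphi^{*_{l,r}}]_{\pi_r}$ genuinely lie in $L^1_l(G:H)$ and $L^1_r(G:H)$, respectively, so that the injectivity arguments apply. Once the correspondence between the two coset pictures is set up (as in Approach II's preamble), every step is a one-line substitution; I would present the computation as a single display chain for $[(\varphi\ast_l\psi)^{*_{l,r}}]_{\pi_r}$ exactly as above, then conclude by injectivity of $T_H^r$ on $L^1_r(G:H)$.
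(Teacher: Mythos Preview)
Your proposal is correct and follows essentially the same route as the paper: both lift the problem to $L^1(G)$ via $[\cdot]_{\pi_l}$ and $[\cdot]_{\pi_r}$, invoke (\ref{qlr1}) and (\ref{1.1.1}) (together with their right-coset analogues), and use the anti-homomorphism and isometry of the standard involution on $L^1(G)$. The only cosmetic difference is that the paper concludes the anti-homomorphism identity by applying $T_H^r$ to both sides, whereas you invoke injectivity of $T_H^r|_{L^1_r(G:H)}$; these are equivalent.
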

\begin{proof}
Let $\varphi,\psi\in L^1(G/_lH,\mu_l)$ be arbitrary. Equivalently, it is sufficient to show that
\begin{equation}
[\left(\varphi\ast_l\psi\right)^{*_{l,r}}]_{\pi_r}=[\psi^{*_{l,r}}\ast_r\varphi^{*_{l,r}}]_{\pi_r}.
\end{equation}
Due to the anti-homomorphism property of the standard involution on $L^1(G)$ also (\ref{1.1.1}) and (\ref{qlr1}) we have
\begin{align*}
[(\varphi\ast_l\psi)^{\ast_{l,r}}]_{\pi_r}
&=[(\varphi \ast_l\psi)_{\pi_l}]^*
\\&=[\varphi_{\pi_l}\ast\psi_{\pi_l}]^*
\\&=[\psi_{\pi_l}]^*\ast[\varphi_{\pi_l}]^*
\\&=[\psi^{\ast_{l,r}}]_{\pi_r}\ast[\varphi^{\ast_{l,r}}]_{\pi_r}
=[\psi^{*_{l,r}}\ast_r\varphi^{*_{l,r}}]_{\pi_r}.
\end{align*}
Hence, we achieve
\begin{align*}
(\varphi\ast_l\psi)^{\ast_{l,r}}&=T_H^r([(\varphi\ast_l\psi)^{\ast_{l,r}}]_{\pi_r})
\\&=T_H^r([\psi^{*_{l,r}}\ast_r\varphi^{*_{l,r}}]_{\pi_r})=\psi^{*_{l,r}}\ast_r\varphi^{*_{l,r}}.
\end{align*}
For all $\varphi\in L^1(G/_lH,\mu_l)$ we have
\begin{align*}
\|\varphi^{\ast_{l.r}}\|_{L^1(G/_rH,\mu_r)}&=\|[\varphi^{*_{l,r}}]_{\pi_r}\|_{L^1(G)}
\\&=\|[\varphi_{\pi_l}]^*\|_{L^1(G)}
\\&=\|\varphi_{\pi_l}\|_{L^1(G)}
\\&=\|T_H^l(\varphi_{\pi_l})\|_{L^1(G/_lH,\mu_l)}=\|\varphi\|_{L^1(G/_lH,\mu_l)}
\end{align*}
\end{proof}


\begin{corollary}
{\it Let $H$ be a compact subgroup of a locally compact group $G$ also let $\mu_l$ and $\mu_r$ be arbitrary $G$-invariant measures on $G/_lH$ and  $G/_rH$ respectively.
The involution-type map
$$^{*_{r,l}}:L^1(G/_rH,\mu_r)\to L^1(G/_lH,\mu_l),$$
is an anti-homomorphism isometric.}
\end{corollary}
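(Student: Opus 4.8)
The plan is to exploit the complete symmetry between the left and right coset pictures in Approach II and to transcribe the proof of the preceding theorem with the roles of $l$ and $r$ interchanged. First I would recall from the text that $^{*_{r,l}}$ is the conjugate-linear map sending $\phi=T_H^r(f)$ to $\phi^{*_{r,l}}:=T_H^l(f^*)$, that the same computation using compactness of $H$ which established well-definedness of $^{*_{l,r}}$ shows $^{*_{r,l}}$ is well defined, and that it satisfies the mirror of (\ref{qlr1}), namely
\[
[\phi_{\pi_r}]^*=[\phi^{*_{r,l}}]_{\pi_l}\qquad(\phi\in L^1(G/_rH,\mu_r)),
\]
which is recorded right after the definition of $^{*_{r,l}}$. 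I would also use that $T_H^l$ inverts the map $\psi\mapsto\psi_{\pi_l}$ on $L^1(G/_lH,\mu_l)$, that $L^1_r(G:H)$ is a subalgebra of $L^1(G)$ (so $\varphi_{\pi_r}\ast\psi_{\pi_r}$ stays in it, which gives the right-coset analogue of (\ref{1.1.1}) for $\ast_r$), and that $\psi\mapsto\psi_{\pi_l}$ preserves the $L^1$-norm (the left-coset counterpart of Corollary \ref{3.2}, valid since $H$ is compact).

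Next I would verify the anti-homomorphism property. For $\phi,\chi\in L^1(G/_rH,\mu_r)$, using the anti-homomorphism property of the standard involution on $L^1(G)$, the analogues of (\ref{1.1.1}) for $\ast_r$ and for $\ast_l$, and the mirror identity above,
\begin{align*}
[(\phi\ast_r\chi)^{*_{r,l}}]_{\pi_l}&=[(\phi\ast_r\chi)_{\pi_r}]^*\\&=[\phi_{\pi_r}\ast\chi_{\pi_r}]^*\\&=[\chi_{\pi_r}]^*\ast[\phi_{\pi_r}]^*\\&=[\chi^{*_{r,l}}]_{\pi_l}\ast[\phi^{*_{r,l}}]_{\pi_l}=[\chi^{*_{r,l}}\ast_l\phi^{*_{r,l}}]_{\pi_l}.
\end{align*}
Applying $T_H^l$ and using that it inverts $\psi\mapsto\psi_{\pi_l}$ then gives $(\phi\ast_r\chi)^{*_{r,l}}=\chi^{*_{r,l}}\ast_l\phi^{*_{r,l}}$.

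Finally the isometry is immediate: with the mirror identity, the norm-preservation of $\psi\mapsto\psi_{\pi_l}$ and of $\phi\mapsto\phi_{\pi_r}$, and the isometry of the standard involution on $L^1(G)$,
\[
\|\phi^{*_{r,l}}\|_{L^1(G/_lH,\mu_l)}=\|[\phi^{*_{r,l}}]_{\pi_l}\|_{L^1(G)}=\|[\phi_{\pi_r}]^*\|_{L^1(G)}=\|\phi_{\pi_r}\|_{L^1(G)}=\|\phi\|_{L^1(G/_rH,\mu_r)}.
\]
I do not expect any genuine obstacle; this is a pure symmetry argument, and the only point needing attention is the bookkeeping, i.e.\ confirming that each auxiliary fact used for the left coset space in the preceding theorem — well-definedness together with the mirror identity, the inversion and norm-preservation properties of $\psi\mapsto\psi_{\pi_l}$, and the convolution compatibility (\ref{1.1.1}) for $\ast_l$ — has its already-established right-coset counterpart, which it does.
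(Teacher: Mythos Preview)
Your proposal is correct and follows exactly the approach implicit in the paper: the corollary is stated there without proof, being an immediate mirror of the preceding theorem obtained by interchanging the roles of $l$ and $r$. Your careful bookkeeping of the right-coset analogues of (\ref{1.1.1}), (\ref{qlr1}), and the norm identities is precisely what the paper leaves to the reader.
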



\begin{remark}
If $H$ is a compact normal subgroup of a locally compact group $G$, left and right coset spaces are the same and so that the involution-type maps given in (\ref{inv1}) and (\ref{inv2}) coincide with the standard involution on $L^1(G/H,\mu)$.
\end{remark}


{\bf ACKNOWLEDGEMENTS.}
The author would like to thank the referees for their valuable comments and remarks.
He also would like to gratefully acknowledge financial support from the Numerical Harmonic Analysis Group (NuHAG) at the Faculty of Mathematics, University of Vienna. Especially, he would like to express his gratitude to Prof. Hans G. Feichtinger (group leader of NuHAG) for stimulating discussions and pointing out various references.

\bibliographystyle{amsplain}

\end{document}